\newtheorem{thm}{Theorem}
\newtheorem{prop}[thm]{Proposition}
\newtheorem{lem}[thm]{Lemma}
\newtheorem{cor}[thm]{Corollary}
\theoremstyle{remark}
\newtheorem{rem}[thm]{Remark}
\newtheorem{exm}[thm]{Example}
\newtheorem{fact}[thm]{Fact}
\newcommand{\colvec}[2]{ \begin{bmatrix} #1 \\ #2 \end{bmatrix} }
\newcommand{\id}{\mathrm{id}}
\newcommand\numberthis{\addtocounter{equation}{1}\tag{\theequation}}
\title{Generalizations of an Expansion Formula for Top to Random Shuffles}
\author{Roger Tian}
\date{\today}
\newcommand\ackname{Acknowledgements}
  \newenvironment{acknowledgements}{%
      \titlepage
      \null\vfil
      \@beginparpenalty\@lowpenalty
      \begin{center}%
        \bfseries \ackname
        \@endparpenalty\@M
      \end{center}}%
     {\par\vfil\null\endtitlepage}
\begin{document}
\maketitle

\begin{abstract}
In the top to random shuffle, the first $a$ cards are removed from a deck of $n$
cards $12 \cdots n$ and then inserted back into the deck. This action can be
studied by treating the top to random shuffle as an element $B_a$, which we define formally in Section \ref{prelim}, of the algebra
$\mathbb{Q}[S_n]$. For $a = 1$, Adriano Garsia in ``On the Powers of Top to Random Shuffling'' (2002) derived an
expansion formula for $B_1^k$ for $k \leq n$, though his proof for the formula was non-bijective. We prove, bijectively, an expansion formula for the arbitrary finite product $B_{a_1}B_{a_2} \cdots B_{a_k}$ where $a_1, \ldots, a_k$ are positive integers, from which an improved version of Garsia's aforementioned formula follows. We show some applications of this formula for $B_{a_1}B_{a_2} \cdots B_{a_k}$, which include enumeration and calculating probabilities. Then for an arbitrary group $G$ we define the group of $G$-permutations $S_n^G := G \wr S_n$ and further generalize the aforementioned expansion formula to the algebra $\mathbb{Q}[S_n^G]$ for the case of finite $G$, and we show how other similar expansion formulae in $\mathbb{Q}[S_n]$ can be generalized to $\mathbb{Q}[S_n^G]$.
\end{abstract}

\tableofcontents

\section{Introduction}
\label{intro}
Shuffling is a much studied topic in probability theory and combinatorics. One mode of shuffling is the \textit{top to random shuffle}, where we remove the first $a$ cards from a deck of $n$ cards $12 \cdots n$ and then insert them back into the deck. 

The case $a=1$ is known as the Tsetlin library, which is usually studied via the characterization of randomly removing a book from a row of $n$ books and placing it at the right end of the row. The Tsetlin library has been much studied as a Markov chain, where the probability of removing each book is assumed to be known. We first briefly mention some work in the literature on this subject and its generalizations.

Hendricks \cite{hend} found the stationary distribution of the Tsetlin library Markov chain. Fill \cite{fill} derived a formula for the distribution of the Tsetlin library after any number of steps, as well as the eigenvalues of this Markov chain. \cite{fill} also contains a wealth of references on this subject. Diaconis, Fill, and Pitman \cite{diaconistop} gave the the distribution of the top to random $m$-shuffle (where the first $m$ cards are moved) after any number of steps. Bidigare, Hanlon, and Rockmore \cite{bidigare} generalized the Tsetlin library to the setting of hyperplane arrangements, and calculated the eigenvalues for this generalized Markov chain. Brown \cite{brown} treated random walks on a class of semigroups called ``left regular bands'', which includes the hyperplane chamber walks of Bidigare, Hanlon, and Rockmore. Uyemura Reyes \cite{reyes} studied the random to random shuffle, where a card is removed at random from the deck and then reinserted into the deck at random. Ayyer, Klee, and Schilling \cite{aks} defined the extended promotion operator and studied promotion Markov chains for arbitrary finite posets, of which the Tsetlin library is the case when the poset is the antichain. 

In a different development, the top to random shuffle has also been studied from the viewpoint of the elements $B_a$ of the algebra $\mathbb{Q}[S_n]$, which is the viewpoint we adopt in most of this paper; intuitively, $B_a$ represents all possible outcomes of removing the first $a$ cards of a deck and then reinserting these cards. Diaconis, Fill, and Pitman \cite{diaconistop} derived (for $1 \leq k
\leq n$), using probabilistic arguments, the expansion formula 
\begin{equation}
\label{eqn1}
B_1^k = \sum_{i=1}^n{i^k
\frac{1}{i!}\sum_{a=i}^n{\frac{(-1)^{a-i}}{(a-i)!}B_a}},
\end{equation}
which describes $k$ iterations of the top to random 1-shuffles. Garsia
\cite{garsiatop} later derived (for $1 \leq k \leq n$), using standard combinatorial
manipulations, the expansion formula
\begin{equation}
\label{eqn2}
B_1^k = \sum_{a=1}^k{S_{k,a}B_a},
\end{equation}
from which (\ref{eqn1}) follows; $S_{k,a}$ is the number of $a$-part partitions of the set $[k]$. Garsia then used (\ref{eqn1}) to determine,
among other things, the eigenvalues and eigenspaces of the image of $B_1$ under
the left regular representation.

Among other uses of formula (\ref{eqn2}), one can calculate the number
of ways to obtain a particular arrangement of the deck via $k$ iterations of the
top to random 1-shuffles, as well as the probability of obtaining such
arrangement of the deck. For example, the number of ways to obtain the identity
deck $12 \cdots n$ is the $k$th Bell number $b_k = \sum_{a=1}^k{S_{k,a}}$ by
this formula, since each $B_a$ contains exactly one copy of the identity deck. An expansion formula for $B_{a_1}B_{a_2} \cdots B_{a_k}$ would allow us to do even more calculations of this kind, where we shuffle the first $a_1$ cards of the deck, then the first $a_2$ cards of the resulting deck, and so on; see Example \ref{shuf ways}.

In Section \ref{prelim}, we introduce the notations and conventions
used in this paper.  In Section \ref{arb fin}, we give a bijection which allows us to derive the expansion formula \[B_{a_1}B_{a_2} \cdots B_{a_k} = \sum_{j=\max(a_1,\ldots,a_k)}^{\min(\sum_{m=1}^k{a_m},n)}{|Q_j^{a_1,\ldots,a_k}|B_j},\] where the coefficients can be calculated by the formula \[|Q_j^{a_1,\ldots,a_k}| = \sum_{\substack{\sum_{c=2}^k{l_c}=j-a_1 \\ l_c \in [0,a_c]}}{\left(\prod_{c=2}^k{{a_c \choose l_c}P \left(a_1+\sum_{i=2}^{c-1}{l_i},a_c-l_c \right)}\right)}\] where $P(m,l) := {m \choose l}l!$. As we will see, one combinatorial interpretation of $|Q_j^{a_1,\ldots,a_k}|$ is the number of ways to obtain the identity deck by shuffling the first $a_1$ cards of the deck, then the first $a_2$ cards of the resulting deck, and so on, such that the cards labeled $1, 2, \ldots, j$ are the only ones touched by our hands (removed from the deck and then reinserted) through this sequence of shuffles. In Section \ref{g-perm}, for an arbitrary group $G$ we introduce the group of $G$-permutations $S_n^G$ as the wreath product $G \wr S_n$ and we define elements $\hat{B}_c$ of the algebra $\mathbb{Q}[S_n^G]$ that describe the top to random shuffle of $c$ cards where each card in the deck now has $|G|$ faces, each of which is labeled by an element of $G$. Then we derive the expansion formula \[\hat{B}_{a_1}\hat{B}_{a_2} \cdots \hat{B}_{a_k} = \sum_{c=\max(a_1,\ldots,a_k)}^{\min(\sum_{m=1}^k{a_m},n)}{|Q_c^{a_1,\ldots,a_k}||G|^{(\sum_{i=1}^k{a_k})-c}\hat{B}_c}\] and show how it can be used in calculations; see Example \ref{g-perm calc}. Afterward, we show how the same method can be used to generalize other expansion formulae of $\mathbb{Q}[S_n]$ to $\mathbb{Q}[S_n^G]$. Finally,
in Section \ref{further}, we discuss possible related future directions of research.


\section*{Acknowledgements}
I would like to thank my advisor Anne Schilling and my colleague Travis
Scrimshaw for helping simplify my proof of Theorem \ref{genbij}. I would also like to thank Joel Lewis for pointing out that Corollary \ref{fac calc} is an immediate consequence of Lemma \ref{grp elt fac}.

\section{Preliminaries}
\label{prelim}
We fix $n$ throughout this paper, and we follow the notation of \cite{garsiatop}, which we describe briefly here. For words $u = u_1u_2 \cdots u_l$ and $v = v_1v_2 \cdots v_m$, let $u \shuffle v$ denote the sum of all words $w = w_1w_2 \cdots w_{l+m}$ with $\{w_1,w_2,\ldots,w_{l+m}\} = \{u_1,\ldots,u_l,v_1,\ldots,v_m\}$ such that for $i < j$, $p < q$, $u_i = w_{i'}$, $u_j = w_{j'}$, $v_p = w_{p'}$, $v_q = w_{q'}$ we have $i' < j'$ and $p' < q'$; $u \shuffle v$ is called the shuffle product of $u$ and $v$. Recall from Garsia's paper that $B_a$ is the element $B_a = 1 \shuffle 2 \shuffle 3 \shuffle \cdots \shuffle a \shuffle W_{a,n} = \sum_{\alpha \in S_a}{\alpha \shuffle W_{a,n}}$ of the group algebra $\mathbb{Q}[S_n]$, where $W_{a,n}$ is
the word $W_{a,n} = (a+1)(a+2) \cdots n$. The motivation is that we have a deck
of cards labeled $1, 2, \ldots, n$, and $B_a$ represents all possible decks
that may result from removing the cards $1, 2, \ldots, a$ and then inserting
them back into the deck (consisting of the cards $a+1, a+2, \ldots, n$). Each of
these resulting decks can be viewed as a permutation in $S_n$, described more
precisely in the following. 

Such a resulting deck $u = c_1c_2 \cdots c_n$ (where $c_i \in [n]$) can be
viewed as the permutation $\colvec{\id}{u}^{-1}$, where $\id = 12 \cdots n$;
note that $\colvec{\id}{u}^{-1} = \colvec{u''}{\id}$ is the biword
$\colvec{\id}{u}$ sorted lexicographically with priority on the bottom row, and
is the inverse of the permutation $\colvec{\id}{u}$. In other words, $u = c_1c_2
\cdots c_n$ can be viewed as the permutation that maps $c_i \mapsto i$ for all
$i \in [n]$; intuitively, the notation $u = c_1c_2 \cdots c_n$ tells us that
card $c_i$ is moved to position $i$ upon shuffling. This convention of
representing a permutation in $S_n$ by a deck of cards will be called
\textbf{deck notation} (for brevity) in this paper, and is just the inverse one-line notation. Of
all the different notations used
to denote a permutation in $S_n$, the deck notation seems most natural for our
purposes, so we will be using the deck notation throughout this paper.

\begin{fact}
\label{perm as term}
Given a permutation (deck of cards) $\sigma$, let $m_{\sigma}$ be the smallest letter/card such that $m_{\sigma} \cdots (n-1) n$ is a subword of $\sigma$. Then $\sigma$ is a term of $B_c$ for any $c \geq  m_{\sigma}-1$.
\end{fact}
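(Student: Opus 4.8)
The plan is to understand the statement carefully and then prove it by the definition of $B_c$ as a shuffle product. Recall that $B_c = \sum_{\alpha \in S_c} \alpha \shuffle W_{c,n}$ where $W_{c,n} = (c+1)(c+2)\cdots n$. A permutation $\sigma$ (in deck notation) is a term of $B_c$ precisely when $\sigma$ can be written as an interleaving (shuffle) of some permutation of $\{1,\ldots,c\}$ with the fixed increasing word $(c+1)(c+2)\cdots n$. Equivalently, $\sigma$ is a term of $B_c$ if and only if the cards $c+1, c+2, \ldots, n$ appear in $\sigma$ in their natural increasing order (as a subsequence). So the whole statement reduces to: if $m_\sigma \cdots (n-1)n$ is a subword of $\sigma$, then for every $c \geq m_\sigma - 1$ the cards $c+1, \ldots, n$ appear in increasing order in $\sigma$.

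**First I would** make precise the "if and only if" characterization of the terms of $B_c$: a deck $\sigma = c_1 c_2 \cdots c_n$ is a term of $B_c$ iff the subsequence of $\sigma$ consisting of the entries in $\{c+1,\ldots,n\}$ is exactly $(c+1)(c+2)\cdots n$. This is immediate from the definition of the shuffle product — in $\alpha \shuffle W_{c,n}$ the relative order of the letters coming from $W_{c,n}$ is preserved, and conversely any deck in which $\{c+1,\ldots,n\}$ occurs in increasing order arises by shuffling the complementary subword (which is some arrangement $\alpha$ of $\{1,\ldots,c\}$) with $W_{c,n}$; and the shuffle product records each such interleaving with coefficient $1$, so $\sigma$ appears in $B_c$ (with coefficient $1$).

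**Then** I would apply this to the hypothesis. By definition of $m_\sigma$, the word $m_\sigma (m_\sigma + 1) \cdots (n-1) n$ is a subword (subsequence) of $\sigma$, which says in particular that the cards $m_\sigma, m_\sigma+1, \ldots, n$ appear in $\sigma$ in increasing order. Now fix any $c \geq m_\sigma - 1$, i.e. $c + 1 \geq m_\sigma$. The set $\{c+1, c+2, \ldots, n\}$ is a subset of $\{m_\sigma, m_\sigma + 1, \ldots, n\}$, and it is an "upper" contiguous block; a subsequence of an increasing subsequence is still increasing, so the cards $c+1, \ldots, n$ also appear in $\sigma$ in increasing order. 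By the characterization above, $\sigma$ is therefore a term of $B_c$.

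**I do not anticipate a serious obstacle here** — the statement is essentially unwinding the definition of $B_c$ via the shuffle product. The one point that requires a little care is the converse direction of the characterization of terms of $B_c$ (that increasing order of the high cards is sufficient, not just necessary), since one must check that the complementary subword is genuinely an arrangement of all of $\{1,\ldots,c\}$ — but this holds automatically because $\sigma$ is a permutation of $[n]$, so the entries not in $\{c+1,\ldots,n\}$ are exactly $\{1,\ldots,c\}$ in some order. A secondary small point is the edge case $c = n$ (or $m_\sigma = 1$), where $W_{c,n}$ is the empty word and the claim degenerates to the trivial fact that every permutation is a term of $B_n = \sum_{\alpha \in S_n} \alpha$; this is consistent with the convention and needs only a remark.
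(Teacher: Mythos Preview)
Your proof is correct. The paper states this as a Fact without proof, treating it as an immediate consequence of the definition of $B_c$; your argument makes explicit exactly the characterization the paper is taking for granted, namely that $\sigma$ is a term of $B_c$ if and only if the letters $c+1,\ldots,n$ appear in $\sigma$ in increasing order.
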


\begin{exm}
The permutation $32145 \cdots n$ is a term of $B_c$ for any $c \geq 2$.
\end{exm}

\begin{fact}
\label{perm injec beg}
In the deck notation, a permutation $\sigma$ is completely determined once we specify the $B_c$ of which $\sigma$ is a term (i.e. specifying that $\sigma$ shuffles the first $c$ cards) and to what positions (integers in $[n]$) in the deck these $c$ cards are sent.
\end{fact}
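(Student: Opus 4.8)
The plan is to show that the map sending a permutation $\sigma$ (in deck notation) to the pair consisting of the index $c$ (such that $\sigma$ is a term of $B_c$ but not of $B_{c-1}$) together with the function recording where the first $c$ cards land, is injective; then specifying this data recovers $\sigma$. First I would recall the structure of a term of $B_c$: by definition $B_c = \sum_{\alpha \in S_c} \alpha \shuffle W_{c,n}$, so every term $\sigma$ of $B_c$ is obtained by interleaving some permutation $\alpha$ of the cards $1, 2, \ldots, c$ into the fixed word $W_{c,n} = (c+1)(c+2)\cdots n$. Crucially, the cards $c+1, c+2, \ldots, n$ appear in $\sigma$ in their natural increasing order, and they occupy the $n-c$ positions not used by the first $c$ cards.

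Given the data in the statement — the value $c$ and the positions $p_1 < p_2 < \cdots$ occupied by cards $1, \ldots, c$, together with the assignment of which card goes to which of these positions — I would reconstruct $\sigma$ as follows: place each of the cards $1, \ldots, c$ into its specified position, and then fill the remaining $n-c$ positions, in left-to-right order, with the cards $c+1, c+2, \ldots, n$ in increasing order. This is forced: since $\sigma$ is a term of $\alpha \shuffle W_{c,n}$ for some $\alpha \in S_c$, the subword of $\sigma$ on the cards exceeding $c$ must equal $W_{c,n}$, so once the positions of the first $c$ cards are known, the positions and values of all remaining cards are determined. Hence $\sigma$ is completely determined by the stated data. (If one prefers the version where only ``some'' $c$ with $\sigma$ a term of $B_c$ is specified, the same reconstruction works verbatim; knowing which $c$ is used is exactly knowing the split between ``touched'' and ``untouched'' cards, and the untouched cards are always in increasing order by the shuffle-product structure.)

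I do not expect a serious obstacle here; the one point requiring a word of care is the claim that the cards $c+1, \ldots, n$ necessarily appear in increasing order in any term of $B_c$, which is immediate from the definition of the shuffle product $\shuffle$ applied to $\alpha$ and $W_{c,n}$ (the relative order of the letters of $W_{c,n}$ is preserved in every shuffle). Once that is noted, the reconstruction is unambiguous and the injectivity follows, so the proof is essentially a direct unwinding of the definitions in Section~\ref{prelim}.
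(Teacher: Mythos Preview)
Your argument is correct: the essential point is that in any term of $B_c = \sum_{\alpha \in S_c} \alpha \shuffle W_{c,n}$ the cards $c+1,\ldots,n$ occur as a subword in increasing order, so once the positions of cards $1,\ldots,c$ are fixed the remaining positions are forced. The paper states this result as a \emph{Fact} without proof, so there is no alternative argument to compare; your write-up simply makes explicit the one-line justification the paper omits.
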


Finally,
given two permutations $\sigma, \tau$ in deck notation, we will always compute
their product $\sigma\tau$ from left to right; in other words, if $\sigma$ maps
$i \mapsto \sigma(i)$ and $\tau$ maps $i \mapsto \tau(i)$, then $\sigma\tau$
maps $i \mapsto \tau(\sigma(i))$.

\begin{exm}
To compute the product of $\sigma_1 = 2134 \cdots n$ and $\sigma_2 = 2314 \cdots
n$ in deck notation, note that the former maps $1 \mapsto 2$, $2 \mapsto 1$, and
fixes all other cards, while the latter maps $1 \mapsto 3$, $2 \mapsto 1$, $3
\mapsto 2$, and fixes all other cards. Thus, we have $\sigma_1\sigma_2 = (2134
\cdots n)(2314 \cdots n) = 1324 \cdots n$ in deck notation. \\
Here is a more intuitive and visual way of seeing this: Whenever a permutation $\sigma$ acts on a card $m$ (by moving or fixing it), we say that $\sigma$ \textbf{hits} card $m$. Both $\sigma_1$ and $\sigma_2$ can be viewed as terms of $B_1$; in other words, they move or fix the top card of the deck.
Start with the default deck $123 \cdots n$. $\sigma_1$ hits card $1$ (the top card) and sends it to
position $2$, giving the deck $2134 \cdots n$. Then $\sigma_2$ hits card $2$
(now the top card) and sends it to position $3$, giving the deck $1324 \cdots
n$, which is also the desired product. Since only the cards $1, 2$ have been hit in $\sigma_1\sigma_2$, we can treat $1324 \cdots n$ as a term of $B_2$.
\end{exm}


We now generalize this visual approach to the multiplication of permutations and apply it in Section \ref{arb fin}. A term of $B_1$, as we have seen above, hits the first card of the deck. In general, a term $\tau$ of $B_c$ hits (simultaneously) the first $c$ cards of the deck; we shall call $\tau$ a \textbf{$c$-shuffle} in this case. By Fact \ref{perm injec beg}, we can write $\tau$ as the $c$-tuple $(\tau(1),\tau(2),\ldots,\tau(c))$ where $\tau(i) \in [n]$ is the position in the deck to which card $i$ is sent by $\tau$; in this way, we can also view $\tau$ as an injection from $[c]$ to $[n]$. We will use this convention for permutations in what follows.

\section{Multiplication Formula for Arbitrary Finite Products}
\label{arb fin}
We find an expansion formula for the $k$-fold product $B_{a_1}B_{a_2} \cdots B_{a_k}$ in terms of the elements $B_c$.
\subsection{A Bijection between Shuffles and Set Partitions}
\label{shuffles partitions}
Notice that $B_{a_1}B_{a_2} \cdots B_{a_k}$ is a sum of terms $\sigma_1\sigma_2 \cdots \sigma_k$ where
$\sigma_i \in S_n$ is a term of $B_{a_i}$. Here
we regard two terms $\sigma'_1\sigma'_2 \cdots \sigma'_k$, $\sigma''_1\sigma''_2
\cdots \sigma''_k$ of $B_{a_1}B_{a_2} \cdots B_{a_k}$ as distinct if $(\sigma'_1,\sigma'_2,\ldots,\sigma'_k)
\neq (\sigma''_1,\sigma''_2,\ldots,\sigma''_k)$, even if $\sigma'_1\sigma'_2 \cdots
\sigma'_k$, $\sigma''_1\sigma''_2 \cdots \sigma''_k$ are equal as products in $S_n$. We denote $\sigma_i = (\sigma_i(1),\sigma_i(2),\ldots,\sigma_i(a_i))$ where $\sigma_i(m)$ is the position to which $\sigma_i$ sends the $m$th card of the deck. Thus, the sequence $(\sigma_1,\ldots,\sigma_k)$ gives rise to the $(\sum_{j=1}^k{a_j})$-tuple $(\sigma_1(1),\sigma_1(2),\ldots,\sigma_1(a_i),\sigma_2(1), \break \sigma_2(2),\ldots,\sigma_2(a_2),\ldots,\sigma_k(1),\sigma_k(2),\ldots,\sigma_k(a_k))$, where $\sigma_i$ is the $i$th segment of this tuple. Now, for ease of counting, we relabel this $(\sum_{j=1}^k{a_j})$-tuple as $(b_1,b_2,\ldots,b_{\sum_{j=1}^k{a_j}})$. In other words, the symbol $b_{m+\sum_{j=1}^{i-1}{a_j}}$ will denote the position $\sigma_i(m)$. Determining $(\sigma_1,\ldots,\sigma_k)$ is thus the same as determining $(b_1,b_2,\ldots,b_{\sum_{j=1}^k{a_j}}) = (\sigma_1(1),\sigma_1(2),\ldots,\sigma_1(a_i),\sigma_2(1), \sigma_2(2),\ldots,\sigma_2(a_2),\ldots,\sigma_k(1), \break \sigma_k(2),\ldots,\sigma_k(a_k))$.

Since $\sigma_i$ acts on the $m$th card by sending it to position $\sigma_i(m) = b_{m+\sum_{j=1}^{i-1}{a_j}}$, we say that the $m$th card is \textbf{hit} by $b_{m+\sum_{j=1}^{i-1}{a_j}}$ through $\sigma_i$, and we will call the entries $b_1,b_2,\ldots,b_{\sum_{j=1}^k{a_j}}$ the $\textbf{hitters}$ of $\sigma_1\sigma_2 \cdots \sigma_k$; here we view each $b_l$ as a formal symbol rather than an integer, and this will be the case whenever we talk about hitting. We say that cards $1, 2, \ldots, j$ are \textbf{hit} by
$\sigma_1 \cdots \sigma_k$ if each of them is hit by some $b_l$. Thus, each such
term $\sigma_1 \sigma_2 \cdots \sigma_k$ hits exactly the cards $1, 2, \ldots, j$ for a
unique $j \in [\max(a_1,\ldots,a_k),\min(\sum_{m=1}^k{a_m},n)]$; it must hit at least the cards $1, 2, \ldots, \max(a_1,\ldots,a_k)$, and it cannot hit more than $n$ cards.
\begin{rem}
We consider every term of $B_c$ to be hitting the first $c$ cards. Only these cards are touched by our hands and then reinserted. The other $n-c$ cards move in the shuffling as well, but they are not touched by our hands; they move only as a result of being displaced by these reinserted cards. Thus, a real-life interpretation of ``the term $\sigma_1 \sigma_2 \cdots \sigma_k$ hits exactly the cards $1, 2, \ldots, j$'' is that our hands touch exactly the cards $1, 2, \ldots, j$ as we carry out the sequence of shuffles $\sigma_1, \sigma_2, \ldots, \sigma_k$ in that order.
\end{rem}

Partition the terms of $B_{a_1}B_{a_2} \cdots B_{a_k}$ into the sets $D_j = \{$terms $(b_1,b_2,\ldots,b_{\sum_{m=1}^k{a_m}}) \break = (\sigma_1,\ldots,\sigma_k) | \sigma_1 \cdots \sigma_k$ hits only the cards $1, 2, \ldots, j\}$, for $j = \max(a_1,\ldots,a_k), \max(a_1,\ldots,a_k)+1, \ldots, \min(\sum_{m=1}^k{a_m},n)$. Now notice that each $\sigma_1 \cdots \sigma_k \in D_j$ is equal (as a permutation) to a term of
$B_j$. Thus, we can partition $D_j$ into the sets $D_{j,t} = \{(\sigma_1,\sigma_2,\ldots,\sigma_k) \in D_j | \sigma_1 \cdots \sigma_k = t$ as
permutations$\}$ for $t$ a term of $B_j$.

Let $P_j([l])$ denote the set of all partitions of $[l]$ into exactly $j$ nonempty parts. Order the parts of each $j$-part partition in increasing order by the smallest element of each part. In other words, if $\alpha \in P_j([l])$, then we will always write $\alpha = \{\alpha_1,\alpha_2,\ldots,\alpha_j\}$ where $\min
\alpha_1 < \min \alpha_2 < \ldots < \min \alpha_j$. Let $S_{l,j} := |P_j([l])|$, which is called the Stirling number of the second kind.
\begin{exm}
$P_2([3])$ consists of the partitions $\{\{1\},\{2,3\}\}$, $\{\{1,2\},\{3\}\}$, $\{\{1,3\},\{2\}\}$. We have $S_{3,2} = 3$.
\end{exm}

To derive an expansion formula for $B_{a_1}B_{a_2} \cdots B_{a_k}$, we will set up a bijection $\phi_j$ between $D_{j,t}$ and a certain subset of $P_j([\sum_{m=1}^k{a_m}])$ that we will denote by $Q_j^{a_1,\ldots,a_k}$, for any term $t$ of $B_j$; we will later see that $\phi_j$ does not depend on $t$, but its inverse does. 

To motivate our definition of $Q_j^{a_1,\ldots,a_k} \subset P_j([\sum_{m=1}^k{a_m}])$, consider the way $(b_1,b_2,\ldots,\break b_{\sum_{m=1}^k{a_m}}) \in D_{j,t}$ acts on the default deck $12 \cdots n$. After the cards $1, 2, \ldots, a_1$ are hit by $b_1, b_2, \ldots, b_{a_1}$, the $b_{a_1+1}, b_{a_1+2}, \ldots, b_{a_1+a_2}$ must hit $a_2$ cards of the cards $1, 2, \ldots, a_1, a_1+1, \ldots, a_1+a_2$, subject to the condition that for $b_{r_{a_1+1}}, b_{r_{a_1+2}}, \ldots, b_{r_l}$ (where $l \in [a_1+1,a_1+a_2]$) hitting the cards $a_1+1, a_1+2, \ldots, l$ respectively, we must have $r_{a_1+1} < r_{a_1+2} < \ldots < r_l$; in other words, the cards $a_1+1, a_1+2, \ldots, l$ must be hit in this order. Generally, after the cards $1, 2, \ldots, i$ are hit by $b_1, b_2, \ldots, b_{1+\sum_{m=1}^{c-1}{a_m}}, b_{2+\sum_{m=1}^{c-1}{a_m}}, \ldots, b_{\sum_{m=1}^c{a_m}}$, the next $a_{c+1}$ cards hit must be from the cards $1, 2, \ldots, i, i+1, \ldots, \min(i+a_{c+1},n)$, subject to the condition that for $b_{r_{i+1}}, b_{r_{i+2}}, \ldots, b_{r_{l}}$ (where $l \in [i+1,\min(i+a_{c+1},n)]$) hitting the cards $i+1, i+2, \ldots, l$ respectively, we must have $r_{i+1} < r_{i+2} < \ldots < r_l$; in other words, the cards $i+1, i+2, \ldots, l$ must be hit in this order.

Based on the above observation, we now describe precisely the relevant subset $Q_j^{a_1,\ldots,a_k} \subset P_j([\sum_{m=1}^k{a_m}])$ in the following manner:
To construct $\alpha = \{\alpha_1,\alpha_2,\ldots,\alpha_j\} \in Q_j^{a_1,\ldots,a_k}$, put the elements $1, 2, \ldots, \sum_{m=1}^k{a_m}$ of $[\sum_{m=1}^k{a_m}]$ (in that order) surjectively into the (initially empty) bins (``parts'') $\alpha_1, \alpha_2, \ldots, \alpha_j$, subject to the following \textbf{Rules}. 
\begin{enumerate}
\item For each $1 \leq i \leq k$ the elements $1+\sum_{m=1}^{i-1}{a_m}, 2+\sum_{m=1}^{i-1}{a_m}, \ldots, \sum_{m=1}^i{a_m}$ must be put injectively into $a_i$ of these bins. At the outset, the elements $1, 2, \ldots, a_1$ must be put injectively into $\alpha_1, \alpha_2, \ldots, \alpha_{a_1}$ respectively.
\item Let $a_1 \leq i \leq j$ be such that $\alpha_1, \alpha_2, \ldots, \alpha_i$ are the only bins filled with at least one element just before placing the elements $1+\sum_{m=1}^{c-1}{a_m}, 2+\sum_{m=1}^{c-1}{a_m}, \ldots, \sum_{m=1}^{c}{a_m}$ where $c \in [2,k]$. (In this case, we have $i \leq \sum_{m=1}^{c-1}{a_m}$.) We place the elements $1+\sum_{m=1}^{c-1}{a_m}, 2+\sum_{m=1}^{c-1}{a_m}, \ldots, \sum_{m=1}^{c}{a_m}$ in this manner:
Pick $l_c \in [0,\min(a_{c},j-i)]$ and elements $r_{1} < r_{2} < \ldots < r_{l_c} \in [1+\sum_{m=1}^{c-1}{a_m},\sum_{m=1}^{c}{a_m}]$. Put element $r_u$ into bin $\alpha_{i+u}$. Then put the remaining elements of $[1+\sum_{m=1}^{c-1}{a_m},\sum_{m=1}^{c}{a_m}]$ injectively into any $a_{c}-l_c$ of the bins $\alpha_1, \ldots, \alpha_i$.
\item We must have $a_1 + \sum_{c=2}^{k}{l_c} = j$.
\end{enumerate}
We call the resulting element $\alpha = \{\alpha_1,\alpha_2,\ldots,\alpha_j\}$ of $Q_j^{a_1,\ldots,a_k}$ an $(a_1,\ldots,a_k)$-segmented $j$-part partition of $[\sum_{m=1}^k{a_m}]$.

We claim that
\begin{thm}
\label{exp'}
\[B_{a_1}B_{a_2} \cdots B_{a_k} = \sum_{j=\max(a_1,\ldots,a_k)}^{\min(\sum_{m=1}^k{a_m},n)}{|Q_j^{a_1,\ldots,a_k}|B_j}\].
\end{thm}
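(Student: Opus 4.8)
The plan is to decompose the sum $B_{a_1}B_{a_2}\cdots B_{a_k}$ along the blocks $D_{j,t}$ already introduced and to show each block has the same size $|Q_j^{a_1,\ldots,a_k}|$, independently of the term $t$ of $B_j$ chosen. Writing $N=\sum_{m=1}^k a_m$, every product $\sigma_1\sigma_2\cdots\sigma_k$ appearing in $B_{a_1}B_{a_2}\cdots B_{a_k}$ hits exactly the cards $1,\ldots,j$ for a unique $j\in[\max(a_1,\ldots,a_k),\min(N,n)]$ and equals, as a permutation, a unique term $t$ of $B_j$, so
\[
B_{a_1}B_{a_2}\cdots B_{a_k}\;=\;\sum_{j=\max(a_1,\ldots,a_k)}^{\min(N,n)}\ \sum_{t\text{ a term of }B_j}|D_{j,t}|\,t .
\]
Since $B_j=\sum_{t}t$ is the sum of all of its terms, it therefore suffices to prove $|D_{j,t}|=|Q_j^{a_1,\ldots,a_k}|$ for every such $j$ and every term $t$ of $B_j$; substituting $|D_{j,t}|=|Q_j^{a_1,\ldots,a_k}|$ and resumming then gives the asserted formula.

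To establish this cardinality equality I would build the bijection $\phi_j\colon D_{j,t}\to Q_j^{a_1,\ldots,a_k}$. Given $(b_1,\ldots,b_N)=(\sigma_1,\ldots,\sigma_k)\in D_{j,t}$, each hitter $b_l$ hits a well-defined card $h(l)\in[j]$ (the card sent to position $b_l$ at that step), and I set $\phi_j(b_1,\ldots,b_N)=\alpha$ with $\alpha_c=\{\,l\in[N]:h(l)=c\,\}$. One then checks that $\alpha\in Q_j^{a_1,\ldots,a_k}$: it is a partition of $[N]$ into exactly $j$ nonempty parts (each of $1,\ldots,j$ is hit, by definition of $D_j$); the parts come out ordered by least element, because the set of already-hit cards is always an initial segment $\{1,\ldots,i\}$ of $[n]$, so card $c$ is first hit strictly after card $c-1$; and Rules 1--3 hold — Rule 1 since each $\sigma_i$ hits $a_i$ distinct cards, Rule 2 since inside segment $c$ the newly exposed cards $i+1,\ldots,l$ must be hit in increasing order of hitter index, and Rule 3 since the number of parts produced is $a_1$ plus the count $\sum_{c\ge 2}l_c$ of genuinely new cards hit in segments $2,\ldots,k$. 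Note that $\phi_j$ makes no reference to $t$.

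The inverse does depend on $t$. Given $\alpha\in Q_j^{a_1,\ldots,a_k}$ and a fixed term $t$ of $B_j$, $\alpha$ tells us which card each hitter $b_l$ hits, and Fact~\ref{perm injec beg} reconstitutes each $\sigma_i$ once we also know the \emph{positions} $\sigma_i(m)=b_l$. I would recover these positions from $t$: since $t$ records the final positions of cards $1,\ldots,j$ and the Rules pin down, segment by segment, which cards move and the relative order in which the unhit cards sit in the deck, the position to which each hit card must be sent is determined by the constraint $\sigma_1\sigma_2\cdots\sigma_k=t$. One then verifies that the resulting sequence $(b_1,\ldots,b_N)$ genuinely lies in $D_{j,t}$ — each $\sigma_i$ is a term of $B_{a_i}$, the product is $t$, and the hit set is exactly $\{1,\ldots,j\}$ — and that $\alpha\mapsto(b_1,\ldots,b_N)$ is a two-sided inverse of $\phi_j$.

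The main obstacle is precisely this inverse map: showing that the position data is recoverable unambiguously from $t$ and $\alpha$, and that the reconstructed $(\sigma_1,\ldots,\sigma_k)$ really multiplies back to $t$ while hitting exactly $1,\ldots,j$. This is where the bookkeeping of how reinsertions permute the deck — in particular the invariant that the set of hit cards is always a prefix $\{1,\ldots,i\}$ — does the real work. Everything else (the decomposition of the sum, the well-definedness of $\phi_j$, and the final resummation) is essentially formal once the Rules defining $Q_j^{a_1,\ldots,a_k}$ are in hand, since those Rules were designed to transcribe exactly these shuffling constraints.
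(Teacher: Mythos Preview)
Your proposal is correct and follows essentially the same route as the paper: the same decomposition into the sets $D_{j,t}$ and the same bijection $\phi_j$ to $Q_j^{a_1,\ldots,a_k}$, with Theorem~\ref{exp'} falling out once $|D_{j,t}|=|Q_j^{a_1,\ldots,a_k}|$ is known. The one place the paper is more concrete than your sketch is the inverse map, which it builds \emph{backward} from $t_k:=t$ by successively peeling off $\sigma_k,\sigma_{k-1},\ldots,\sigma_1$ (using $\alpha$ to read off which card each hitter must hit and the current deck $t_i$ to read off the position), and then separately verifies that the resulting $t_0$ is the identity deck by a comparison of $\min\alpha_{m_1}$ and $\min\alpha_{m_2}$ --- exactly the ``obstacle'' you flagged.
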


We first need the following theorem.
\begin{thm}
\label{genbij}
Let $j \in [\max(a_1,\ldots,a_k),\min(\sum_{m=1}^k{a_m},n)]$ and let $t$ be any term of $B_j$. Define the map $\phi_j:
D_{j,t} \rightarrow Q_j^{a_1,\ldots,a_k}$ given by $(b_1,b_2,\ldots,b_{\sum_{m=1}^k{a_m}}) = (\sigma_1,\sigma_2,\ldots,\sigma_k) \mapsto \alpha = \{\alpha_1,\alpha_2,\ldots,\alpha_j\}$ where $\alpha_i = \{l \in [\sum_{m=1}^k{a_m}] | $card $i$ is hit by $b_l\}$. Then $\phi_j$ is a bijection.
\end{thm}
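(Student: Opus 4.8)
The plan is to show $\phi_j$ is well-defined (i.e. lands in $Q_j^{a_1,\ldots,a_k}$) and then construct an explicit two-sided inverse $\psi_j^t: Q_j^{a_1,\ldots,a_k} \to D_{j,t}$ depending on the chosen term $t$ of $B_j$. For well-definedness, given $(\sigma_1,\ldots,\sigma_k) \in D_{j,t}$ with hitters $b_1,\ldots,b_{\sum a_m}$, I would check that $\alpha = \{\alpha_1,\ldots,\alpha_j\}$ with $\alpha_i = \{l : \text{card } i \text{ hit by } b_l\}$ satisfies the three Rules. That cards $1,\ldots,j$ are exactly the hit cards gives surjectivity onto $j$ nonempty bins; that each $\sigma_c$ is an injection $[a_c]\to[n]$ hitting $a_c$ cards gives Rule 1 (and the base case $1,\ldots,a_1 \mapsto \alpha_1,\ldots,\alpha_{a_1}$ follows since $\sigma_1$ is a term of $B_{a_1}$ and cards $1,\ldots,a_1$ are first hit in that order, so they open the bins in order — here I use the convention that bins are ordered by least element). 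Rule 2 is the content of the long displayed observation preceding the theorem: when $\sigma_c$ acts, the cards available to be hit are $1,\ldots,i,i+1,\ldots,\min(i+a_c,n)$ where $\alpha_1,\ldots,\alpha_i$ are the currently open bins, and the newly opened bins $\alpha_{i+1},\ldots,\alpha_{i+l_c}$ are opened by cards $i+1,\ldots,i+l_c$ in increasing order of hitter index, which is exactly the $r_1<\cdots<r_{l_c}$ condition; the $l_c$ here equals the number of genuinely new cards hit by $\sigma_c$. Rule 3, $a_1+\sum_{c=2}^k l_c = j$, is just that the total number of bins opened equals $j$.

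For the inverse, fix a term $t$ of $B_j$, say $t$ sends card $i$ to position $p_i$ for $i\in[j]$. Given $\alpha \in Q_j^{a_1,\ldots,a_k}$, I reconstruct the hitters segment by segment: the $l$-th hitter $b_l$ lies in segment $c$ (i.e. $l \in [1+\sum_{m<c}a_m, \sum_{m\le c}a_m]$) and should hit card $i$ where $\alpha_i$ is the bin containing $l$; so I define the $c$-shuffle $\sigma_c$ by declaring which card each of its $a_c$ hitters targets (read off from $\alpha$), and I must assign each such hitter an actual \emph{position} in $[n]$. The positions are forced by the requirement $\sigma_1\cdots\sigma_k = t$: after all $k$ shuffles, card $i$ (for $i\le j$) must end at position $p_i$, and cards $j+1,\ldots,n$ must end wherever $t$ puts them. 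Running the deck forward, a card $i$ that is hit in segments $c_1<c_2<\cdots<c_s$ is moved to some intermediate positions in segments $c_1,\ldots,c_{s-1}$ and finally to $p_i$ in segment $c_s$; the intermediate positions are constrained only by injectivity of each $\sigma_c$ as a map into $[n]$ and by the monotonicity built into Rule 2 (cards opening new bins must be hit in the induced order). The key point is that, because Rule 2 mirrors exactly which cards are \emph{available} to be hit at each stage (the top $\min(i+a_{c+1},n)$ cards of the current deck), every legal choice of $\alpha$ corresponds to at least one consistent choice of positions, and conversely $\phi_j\circ\psi_j^t = \id$ and $\psi_j^t\circ\phi_j = \id$ because $\alpha$ records precisely the "which card is hit by which hitter" data while $t$ records the "final positions" data — and by Fact \ref{perm injec beg} these two pieces of data together determine each $\sigma_c$ uniquely. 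I would organize this as: (a) $\phi_j$ is well-defined; (b) $\phi_j$ is injective, since $(b_1,\ldots,b_{\sum a_m})$ together with $t$ determines every $\sigma_c$ by Fact \ref{perm injec beg} and $\alpha$ plus $t$ recovers the hitters' targets; (c) $\phi_j$ is surjective, by the forward-simulation construction of $\psi_j^t$.

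The main obstacle I expect is step (c), surjectivity: given an abstract segmented partition $\alpha$, one must exhibit an honest tuple $(\sigma_1,\ldots,\sigma_k)$ of $a_c$-shuffles whose product is exactly $t$ and whose hit-pattern is $\alpha$. The subtlety is that "which card is available to be hit" in segment $c$ depends on the \emph{positions} chosen in earlier segments (the deck gets rearranged), yet Rule 2 is phrased purely combinatorially in terms of bin-indices; I need to verify that the bin-index bound $i \le \sum_{m<c}a_m$ together with the "top $\min(i+a_c,n)$ cards" description is compatible with \emph{some} assignment of positions, and that one can always finish the last shuffle touching card $i$ by sending it to the prescribed $p_i$ without collision. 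I would handle this by induction on $k$ (or on the number of segments processed), tracking the invariant that after segment $c$ the deck is some arrangement in which the currently-hit cards $1,\ldots,i$ occupy a prescribed-enough set of positions, and appealing to the fact that an injection from $[a_{c+1}]$ into $[n]$ can always be extended/chosen to realize any prescribed target-card pattern since there are always enough free positions. The remaining verifications — that the two composites are the identity — are then routine bookkeeping, using Fact \ref{perm injec beg} to pin down each $\sigma_c$.
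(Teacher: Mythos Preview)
Your well-definedness argument (a) is fine and matches the paper. The gap is in your surjectivity construction (c). You write that the intermediate positions of a repeatedly-hit card ``are constrained only by injectivity of each $\sigma_c$'' and that one must show ``every legal choice of $\alpha$ corresponds to at least one consistent choice of positions,'' but this misreads the situation: since $\phi_j$ is to be injective, the intermediate positions are \emph{completely determined} by $\alpha$ and $t$, not merely constrained. And a forward pass cannot extract them: after applying $\sigma_1$ to the identity deck, knowing which cards $\sigma_2$ must find on top pins down only the top $a_2$ slots of the intermediate deck $t_1$, not all of $\sigma_1$; the remaining positions of the cards $1,\ldots,a_1$ in $t_1$ are fixed only by constraints coming from \emph{all} later segments simultaneously. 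Your proposed inductive invariant (``currently-hit cards occupy a prescribed-enough set of positions'') is too weak to close this, and your appeal to ``enough free positions'' would, if taken literally, produce many preimages and contradict injectivity.

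The paper resolves this by running the reconstruction \emph{backward}. Set $t_k := t$; for $i = k, k-1, \ldots, 1$, read off from $\alpha$ the card $r_l$ hit by each $b_l$ in segment $i$, and define $\sigma_i$ to send the $m$-th card to the position that card $r_{m+\sum_{l<i}a_l}$ occupies in $t_i$ (Rule~1 makes this an honest $a_i$-shuffle); then put $t_{i-1} := t_i\sigma_i^{-1}$. This is exactly the recursion your injectivity sketch (b) implicitly needs when you invoke Fact~\ref{perm injec beg}, so you should use the \emph{same} backward construction for both (b) and (c) rather than switching to a forward pass. The one nontrivial verification that remains---and which your proposal does not address---is that the backward recursion terminates at the identity deck, i.e.\ $t_0 = 12\cdots n$. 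The paper proves this by showing, for any $m_1 < m_2 \le j$, that card $m_1$ ends up left of card $m_2$ after all reversals: Rule~2 forces $\min\alpha_{m_1} < \min\alpha_{m_2}$ and forces the segment of $\min\alpha_{m_1}$ to be no later than that of $\min\alpha_{m_2}$, so the last reversal touching card $m_1$ comes no earlier than the last one touching $m_2$, which yields the desired order. This use of Rule~2 is the heart of the argument and is what your forward-simulation plan was missing.
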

\begin{proof}
Suppose $(b_1,b_2,\ldots,b_{\sum_{m=1}^k{a_m}}) = (\sigma_1,\sigma_2,\ldots,\sigma_k) \in D_{j,t}$. Then the $b_1,b_2,\ldots,b_{\sum_{m=1}^k{a_m}}$ hit (in that order) the cards $1, 2, \ldots, j$, which corresponds to putting the elements $1, 2, \ldots, \break \sum_{m=1}^k{a_m}$ of $[\sum_{m=1}^k{a_m}]$ (in that order) into the bins $\alpha_1, \alpha_2, \ldots, \alpha_j$ subject to Rules 1-3 above. This shows that $\phi_j(\sigma_1,\sigma_2,\ldots,\sigma_k)$ is indeed an element of $Q_j^{a_1,\ldots,a_k}$.

We now check simultaneously the injectivity and surjectivity of $\phi_j$. Let $\alpha = \{\alpha_1,\alpha_2,\ldots,\alpha_j\} \in Q_j^{a_1,\ldots,a_k}$. We show that $\alpha$ uniquely determines the $(b_1,b_2,\ldots,b_{\sum_{m=1}^k{a_m}}) = (\sigma_1,\sigma_2,\ldots,\sigma_k)$ for which $\phi_j(\sigma_1,\sigma_2,\ldots,\sigma_k) = \alpha$. We may assume that the parts of $\alpha$ have been ordered lexicographically; in other words, $\min \alpha_l < \min \alpha_m$ for $1 \leq l < m \leq j$. 

We will determine what the $\sigma_k, \sigma_{k-1}, \ldots, \sigma_1$ (or equivalently $b_{\sum_{m=1}^k{a_m}},\ldots,b_2,b_1$) are, in that order. For each $b_l$ let $\alpha_{r_l}$ denote the part of $\alpha$ containing $l$; in other words, $r_l$ is the card hit by $b_l$. If $p_{r_{\sum_{m=1}^k{a_m}}}$ is the position of card $r_{\sum_{m=1}^k{a_m}}$ in $t_k := t$, then we know that $b_{\sum_{m=1}^k{a_m}}$ sends the $a_k$th card to position $p_{r_{\sum_{m=1}^k{a_m}}}$. If $p_{r_{(\sum_{m=1}^k{a_m})-1}}$ is the position of card $r_{(\sum_{m=1}^k{a_m})-1}$ in $t_k$, then we know that $b_{(\sum_{m=1}^k{a_m})-1}$ sends the $(a_{k}-1)$st card to position $p_{r_{(\sum_{m=1}^k{a_m})-1}}$. Continuing this process, we determine that $\sigma_k = (b_{(\sum_{m=1}^{k-1}{a_m})+1},\ldots,b_{(\sum_{m=1}^k{a_m})-1},b_{\sum_{m=1}^k{a_m}})$, where $b_{(\sum_{m=1}^k{a_m})-u}$ sends the $(a_k-u)$th card to position $p_{r_{(\sum_{m=1}^k{a_m})-u}}$. Notice that, by Rule 1 above, $\alpha_{r_u} \neq \alpha_{r_v}$ whenever $(\sum_{m=1}^{k-1}{a_m})+1 \leq u,v \leq \sum_{m=1}^k{a_m}$ are distinct, so $r_u \neq r_v$ whenever $(\sum_{m=1}^{k-1}{a_m})+1 \leq u,v \leq \sum_{m=1}^k{a_m}$ are distinct. Thus, $\sigma_k = (b_{(\sum_{m=1}^{k-1}{a_m})+1},\ldots,b_{(\sum_{m=1}^k{a_m})-1}, \break b_{\sum_{m=1}^k{a_m}})$ is indeed a term of $B_{a_k}$. Next, we can similarly determine the $b_{\sum_{m=1}^{k-1}{a_m}}, b_{(\sum_{m=1}^{k-1}{a_m})-1}, \break \ldots, b_{(\sum_{m=1}^{k-2}{a_m})+1}$ by looking at $t_{k-1}$, the deck right before being hit by
$\sigma_k$; in other words, $t_{k-1}$ is obtained from $t_k$ by reversing the action of $\sigma_k$. In general, fix any $0 \leq i \leq k$, and let $t_i$ be the deck right before being hit by $\sigma_{i+1}$. If $p_{r_{(\sum_{m=1}^i{a_m})-u}}$ is the position of card $r_{(\sum_{m=1}^i{a_m})-u}$ in $t_i$, then we know that $b_{(\sum_{m=1}^i{a_m})-u}$ sends the $(a_i-u)$th card to position $p_{r_{(\sum_{m=1}^i{a_m})-u}}$. In this way, $\sigma_i$ is uniquely determined, and $\sigma_i$ is indeed a term of $B_{a_i}$ by Rule 1 above.

Now we must check that, as we reverse the actions of $\sigma_k, \sigma_{k-1}, \ldots, \sigma_1$, we eventually get the identity deck $12 \cdots n$; in other words, we must check that $t_0 = 12 \cdots n$.

Let $m_1 < m_2 \in [j]$. We now show that card $m_1$ eventually ends up to the left of card $m_2$ as we reverse the actions of $\sigma_k, \sigma_{k-1}, \ldots, \sigma_1$ in that order. Consider the elements $\min \alpha_{m_1}, \min \alpha_{m_2} \in [\sum_{m=1}^k{a_m}]$, because the actions of $b_{\min \alpha_{m_1}}$ and $b_{\min \alpha_{m_2}}$ are the last to be reversed on the cards $m_1$ and $m_2$, respectively. Suppose that both $\min \alpha_{m_1}, \min \alpha_{m_2}$ belong in the interval $[1+\sum_{m=1}^{i-1}{a_m},\sum_{m=1}^i{a_m}]$ for some $i \in [k]$. Then by Rule 2 above, we must have $\min \alpha_{m_1} < \min \alpha_{m_2}$ (since $\alpha_{m_1}$ must receive its first element before $\alpha_{m_2}$ does), so card $m_1$ will indeed end up to the left of card $m_2$ after the action of $\sigma_i$ is reversed. Suppose that $\min \alpha_{m_1} \in [1+\sum_{m=1}^{c-1}{a_m},\sum_{m=1}^c{a_m}]$ and $\min \alpha_{m_2} \in [1+\sum_{m=1}^{d-1}{a_m},\sum_{m=1}^d{a_m}]$ for some $c \neq d \in [k]$. This means $b_{\min \alpha_{m_1}}$ belongs to $\sigma_c$ and $b_{\min \alpha_{m_2}}$ belongs to $\sigma_d$. By Rule 2, we must have $c < d$, because $\alpha_{m_1}$ must receive its first element before $\alpha_{m_2}$ does. It follows that the action of $\sigma_d$ gets reversed before the action of $\sigma_c$, so card $m_1$ must indeed end up to the left of card $m_2$.
\end{proof}
\begin{rem}
We see from this proof that $\phi_j^{-1}$ is given as follows. Given $\alpha = \{\alpha_1,\alpha_2,\ldots,\alpha_j\} \in Q_j^{a_1,\ldots,a_k}$, for each $b_l$ let $\alpha_{r_l}$ denote the part of $\alpha$ containing $l$; in other words, $r_l$ is the card hit by $b_l$. For each $0 \leq i \leq k$ let $t_i$ denote the deck right before being hit by $\sigma_{i+1}$, and let $p_{i,c}$ denote the position of card $r_c$ in $t_i$. Then $\phi_j^{-1}(\alpha) = (b_1,b_2,\ldots,b_{\sum_{m=1}^k{a_m}})$ where $b_{d+\sum_{m=1}^{i-1}{a_m}}$ sends the $d$th card in the deck to position $p_{i,d+\sum_{m=1}^{i-1}{a_m}}$, for $d \in [a_i]$.
\end{rem}

By definition, the sets $D_j$ partition the set of terms of $B_{a_1}B_{a_2} \cdots B_{a_k}$, and the sets $D_{j,t}$ partition $D_j$. Each set $D_j$ forms $|Q_j^{a_1,\ldots,a_k}|$ copies of $B_j$, as the sets $D_{j,t}$ are all equinumerous with cardinality $|Q_j^{a_1,\ldots,a_k}|$ by Theorem \ref{genbij}. Therefore, Theorem \ref{exp'} follows as a corollary.

\subsection{Calculating the Coefficients $|Q_j^{a_1,\ldots,a_k}|$}
We now find an explicit formula for $|Q_j^{a_1,\ldots,a_k}|$, which is the number of ways of putting the elements of $[\sum_{m=1}^k{a_m}]$ (in the order $1, 2, \ldots, \sum_{m=1}^k{a_m}$) into the (initially empty) containers $\alpha_1, \alpha_2, \ldots, \alpha_j$, subject to the Rules 1-3 we described earlier. Here we use the notation $P(m,l) = {m \choose l}l!$.

As a start, we can partition the set $Q_j^{a_1,\ldots,a_k}$ by which elements of $[\sum_{m=1}^k{a_m}]$ we choose to be the $\min \alpha_1$, $\min \alpha_2$, $\ldots$, $\min \alpha_j$; by definition $\min \alpha_l$ is the first element put into $\alpha_l$. Of course, we must have $\min \alpha_i = i$ for $i = 1, 2, \ldots, a_1$. It then remains to determine the $\min \alpha_{a_1+1}, \min \alpha_{a_1+2}, \ldots, \min \alpha_j$ among the remaining $\sum_{m=2}^k{a_m}$ elements; we will call these $j-a_1$ elements \textbf{anchor elements} or \textbf{anchors}.

Once these $j-a_1$ elements (equivalently, $j$ elements) have been chosen, then we
are guaranteed that the eventual partition $\alpha = \{\alpha_1, \alpha_2,
\ldots, \alpha_j\}$ will indeed have $j$ parts. By Rules 2 and 3, for each interval $[1+\sum_{m=1}^{c-1}{a_m},\sum_{m=1}^c{a_m}]$ (where $c \in [2,k]$) we need to choose certain elements $m_1 < m_2 < \ldots < m_{l_c} \in [1+\sum_{m=1}^{c-1}{a_m},\sum_{m=1}^c{a_m}]$ (where $l_c \leq a_c$ can be zero) to be the anchors, so that in the end we have $\sum_{c=2}^k{l_c}=j-a_1$. Fix such a tuple $(l_2,l_3,\ldots,l_k)$.

Consider the interval $[1+\sum_{m=1}^{c-1}{a_m},\sum_{m=1}^c{a_m}]$ where $c \in [2,k]$. There are ${a_c \choose l_c}$ ways to choose the anchor elements $m_1, m_2, \ldots, m_{l_c}$ in this interval. Once the elements $m_1, m_2, \ldots, m_{l_c}$ have been chosen as the
anchors, the remaining $a_c-l_c$ elements of this interval have to be put injectively into the sets $\alpha_1, \alpha_2, \ldots,
\alpha_{a_1+\sum_{i=2}^{c-1}{l_i}}$ (because only these sets have received at least one element at this time), and there are $P(a_1+\sum_{i=2}^{c-1}{l_i},a_c-l_c)$ ways to do this. Thus, there are a total of $N_c := {a_c \choose l_c}P(a_1+\sum_{i=2}^{c-1}{l_i},a_c-l_c)$ ways to assign the elements of this interval.

Notice that all the intervals $[1+\sum_{m=1}^{c-1}{a_m},\sum_{m=1}^c{a_m}]$ where $c \in [2,k]$ are disjoint. Hence there are a total of $\prod_{c=2}^k{N_c} = \prod_{c=2}^k{{a_c \choose l_c}P(a_1+\sum_{i=2}^{c-1}{l_i},a_c-l_c)}$ ways to assign all the elements, for each
tuple $(l_2,l_3,\ldots,l_k)$.

We conclude that (for $k \geq 2$) the formula is 
\begin{equation}\label{cardform} |Q_j^{a_1,\ldots,a_k}| = \sum_{\substack{\sum_{c=2}^k{l_c}=j-a_1 \\ l_c \in [0,a_c]}}{\left(\prod_{c=2}^k{{a_c \choose l_c}P \left(a_1+\sum_{i=2}^{c-1}{l_i},a_c-l_c \right)}\right)}, \end{equation}
where the sum is over all tuples $(l_2,l_3,\ldots,l_k)$ (with $l_c \in [0,a_c]$) such that $\sum_{c=2}^k{l_c}=j-a_1$.
\begin{rem}
For any tuple $(l_2,l_3,\ldots,l_k)$ (with $l_c \in [0,a_c]$) such that $\sum_{c=2}^k{l_c}=j-a_1$, the nonnegative term \[\prod_{c=2}^k{{a_c \choose l_c}P \left(a_1+\sum_{i=2}^{c-1}{l_i},a_c-l_c \right)}\] counts the number of ways of ways to obtain the identity deck through this sequence of $k$ top to random shuffles such that only the cards $1, 2, \ldots, j$ (cumulatively) are hit and exactly $l_c$ new cards are hit during the $c$th shuffle (where a total of $a_c$ cards, old and new, are hit).
\end{rem}

\subsection{Applications}
In particular, for $a_1 = \ldots = a_k = 1$, Theorem \ref{exp'} gives us \[B_1^k = \sum_{j=1}^{\min(k,n)}{|Q_j^{1,\ldots,1}|B_j}.\] By the above and the following proposition, we see that Garsia's formula (\ref{eqn2}) follows as a special case, except now the restriction $1 \leq k \leq n$ is removed.
\begin{prop}
$|Q_j^{1,\ldots,1}| = S_{k,j}$.
\end{prop}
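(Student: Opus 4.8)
The plan is to show that $|Q_j^{1,\ldots,1}|$, the number of $(1,\ldots,1)$-segmented $j$-part partitions of $[k]$, equals the Stirling number $S_{k,j}$ by exhibiting a bijection with $P_j([k])$. When every $a_i = 1$, each interval $[1+\sum_{m=1}^{c-1}a_m, \sum_{m=1}^c a_m]$ is the single-element interval $\{c\}$, so Rule 2 collapses dramatically: for each $c \in [2,k]$, the lone element $c$ is either chosen as an anchor ($l_c = 1$, so $c$ opens a brand-new bin $\alpha_{i+1}$) or placed into one of the $i$ already-nonempty bins $\alpha_1,\ldots,\alpha_i$ ($l_c = 0$). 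Rule 1 forces the first element, $1$, into $\alpha_1$. So I would argue that $Q_j^{1,\ldots,1}$ is precisely the set of partitions of $[k]$ into $j$ nonempty parts, where the parts are ordered by least element — which is exactly the description of $P_j([k])$ given earlier in the excerpt.

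Concretely, first I would note that every $\alpha \in Q_j^{1,\ldots,1}$ is by construction an element of $P_j([k])$ (Rule 3 guarantees exactly $j$ parts, Rule 1 guarantees nonemptiness), with parts already sorted by minimal element, so $Q_j^{1,\ldots,1} \subseteq P_j([k])$. Conversely, given any $\beta = \{\beta_1,\ldots,\beta_j\} \in P_j([k])$ with $\min\beta_1 < \cdots < \min\beta_j$, I would check that inserting $1,2,\ldots,k$ in order into these bins reproduces $\beta$ under the Rules: element $1$ necessarily lies in $\beta_1$ (it is the global minimum), matching Rule 1; and for each subsequent element $c$, either $c = \min\beta_s$ for some $s$ — in which case $\beta_1,\ldots,\beta_{s-1}$ are exactly the bins already opened (since their minima are all $< c$) and $\beta_{s},\ldots,\beta_j$ are all still empty (their minima are $\geq c$, hence $> c$), so placing $c$ as the anchor opening $\beta_s$ is a legal Rule-2 move — or $c$ is not a minimum, so its bin $\beta_s$ is already nonempty and placing $c$ there with $l_c = 0$ is legal. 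Hence $P_j([k]) \subseteq Q_j^{1,\ldots,1}$, giving $|Q_j^{1,\ldots,1}| = |P_j([k])| = S_{k,j}$.

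The only mild subtlety — and the one place I would be careful — is verifying that in the $l_c = 1$ case the new bin opened is forced to be the ``next'' bin $\alpha_{i+1}$ in the lex order, i.e. that the anchor structure is consistent with the requirement $\min\alpha_1 < \cdots < \min\alpha_j$; this amounts to observing that anchors are introduced in increasing order of value (since we process $1,2,\ldots,k$ left to right and anchors are exactly the bin-minima), so the $s$-th bin to be opened has the $s$-th smallest minimum, matching the lexicographic ordering convention. Everything else is bookkeeping. Alternatively, one could bypass the bijection entirely and specialize formula (\ref{cardform}): with all $a_c = 1$ the constraint becomes $\sum_{c=2}^k l_c = j-1$ with $l_c \in \{0,1\}$, and ${1 \choose l_c}P(i, 1-l_c)$ equals $1$ when $l_c = 1$ and equals $i$ when $l_c = 0$, so the sum becomes $\sum$ over choices of which $j-1$ of the positions $2,\ldots,k$ are anchors, of the product of the ``current bin counts'' at the non-anchor positions — which is exactly the standard sum formula for $S_{k,j}$. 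I would present the bijective argument as the main proof since it is cleaner and fits the paper's combinatorial spirit, mentioning the formula specialization as a remark.
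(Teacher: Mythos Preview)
Your proposal is correct and follows essentially the same approach as the paper's proof: both observe that when every $a_i = 1$ each $l_c \in \{0,1\}$, so element $c$ either opens the next empty bin (anchor) or goes into an already-opened bin, and hence the Rules produce exactly all ordered $j$-part partitions of $[k]$. Your version is simply more explicit in spelling out both inclusions $Q_j^{1,\ldots,1} \subseteq P_j([k])$ and $P_j([k]) \subseteq Q_j^{1,\ldots,1}$, whereas the paper compresses this into a couple of sentences; your remark about specializing formula~(\ref{cardform}) is an extra observation not in the paper's proof (though the paper records it immediately afterward as a corollary).
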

\begin{proof}
Looking at Rules 1-3 for the construction of a segmented partition, we see that each $l_c$ is $0$ or $1$ for $c \in [2,k]$. If $l_c = 1$, then the element $c$ is an anchor and is placed into an empty bin; the element $1$ is always placed into the bin $\alpha_1$. If $l_c = 0$, then the element $c$ is placed into a nonempty bin. Every $j$-part partition $\alpha \in P_j([k])$ (whose parts are ordered by their smallest elements) can be constructed in this way, so the claim follows.
\end{proof}
\begin{cor}
By taking $a_1 = \ldots = a_k = 1$ in (\ref{cardform}), we obtain another formula for the Stirling number $S_{k,j}$.
\end{cor}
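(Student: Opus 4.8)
The plan is to specialize formula (\ref{cardform}) at $a_1 = \cdots = a_k = 1$, simplify the resulting summand, and then invoke the preceding Proposition, which identifies $|Q_j^{1,\ldots,1}|$ with $S_{k,j}$; the equality of the two expressions for $|Q_j^{1,\ldots,1}|$ is the promised ``another formula''. First I would dispose of the boundary case $k = 1$ (where (\ref{cardform}) was not asserted): there $j$ is forced to equal $1$ and both $S_{1,1}$ and $|Q_1^{1}|$ equal $1$, so assume $k \geq 2$. With every $a_c = 1$, the admissible index tuples in the outer sum of (\ref{cardform}) are exactly the vectors $(l_2,\ldots,l_k) \in \{0,1\}^{k-1}$ with $l_2 + \cdots + l_k = j - a_1 = j - 1$; equivalently, each such tuple records which $j-1$ of the positions $2, \ldots, k$ are ``anchor positions'' (those $c$ with $l_c = 1$).

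Next I would simplify the summand. Since $\binom{a_c}{l_c} = \binom{1}{l_c} = 1$ for every $c$, the product over $c$ collapses to $\prod_{c=2}^{k} P\!\left(1 + \sum_{i=2}^{c-1} l_i,\; 1 - l_c\right)$. Using $P(m,l) = \binom{m}{l}\,l!$, we have $P(m,0) = 1$ and $P(m,1) = m$; hence the factor indexed by $c$ equals $1$ when $l_c = 1$, and equals $1 + \sum_{i=2}^{c-1} l_i$ when $l_c = 0$. Therefore (\ref{cardform}) specializes, via the Proposition, to
\[
S_{k,j} \;=\; \sum_{\substack{(l_2,\ldots,l_k)\,\in\,\{0,1\}^{k-1} \\ l_2 + \cdots + l_k \,=\, j-1}} \;\; \prod_{\substack{2 \,\le\, c \,\le\, k \\ l_c \,=\, 0}} \Bigl(1 + \sum_{i=2}^{c-1} l_i\Bigr).
\]

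Finally I would record the combinatorial sanity check, which also explains why there is essentially no obstacle here: the quantity $1 + \sum_{i=2}^{c-1} l_i = a_1 + \sum_{i=2}^{c-1} l_i$ appearing in (\ref{cardform}) is precisely the number of bins already containing an element just before element $c$ is placed (the leading $1$ accounting for $\alpha_1$, which always receives element $1$), so the displayed identity is nothing but the standard description of $S_{k,j}$ obtained by processing $1, 2, \ldots, k$ in order, opening a new block exactly at element $1$ and at the anchor positions, and inserting each non-anchor element $c$ into one of the currently available blocks. The only point requiring a little care is matching the running index of $P(\cdot,\cdot)$ in (\ref{cardform}) with this running block count, and observing that $a_c - l_c \in \{0,1\}$ forces the general factor $P(\cdot,\cdot)$ to collapse to one of its two trivial values; with that checked, the corollary is immediate.
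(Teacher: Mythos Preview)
Your proposal is correct and follows exactly the route the paper intends: the corollary is stated in the paper without proof, as an immediate consequence of combining the Proposition $|Q_j^{1,\ldots,1}| = S_{k,j}$ with the specialization of (\ref{cardform}) at $a_1=\cdots=a_k=1$. You have simply spelled out the resulting formula and the simplification $\binom{1}{l_c}=1$, $P(m,0)=1$, $P(m,1)=m$ explicitly, which the paper omits; your combinatorial sanity check at the end is a welcome bonus but not required.
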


For $k = 2$, Theorem \ref{exp'} gives us \[B_{a_1}B_{a_2} = \sum_{j=\max(a_1,a_2)}^{\min(a_1+a_2,n)}{|Q_j^{a_1,a_2}|B_j}.\] We have $|Q_j^{a_1,a_2}| = \sum_{l_2=j-a_1}{(\prod_{c=2}^2{{a_c \choose l_c}P(a_1+\sum_{i=2}^{c-1}{l_i},a_c-l_c)})} = {a_2 \choose l_2}P(a_1,a_2-l_2) = \break {a_2 \choose j-a_1}P(a_1,a_2+a_1-j) = \frac{a_2!}{(j-a_1)!(a_2+a_1-j)!} \frac{a_1!}{(j-a_2)!}$. This gives Garsia's formula (1.3) \[B_{a_1}B_{a_2} = \sum_{j=\max(a_1,a_2)}^{\min(a_1+a_2,n)}{\frac{a_2!}{(j-a_1)!(a_2+a_1-j)!} \frac{a_1!}{(j-a_2)!}B_j}\] in \cite{garsiatop}, which also appears in Theorem 4.2 of \cite{diaconistop}.

Now we show how Theorem \ref{exp'} can be used in some calculations.
\begin{exm}
\label{shuf ways}
How many ways can we obtain the deck $i(i-1) \cdots 1(i+1) \cdots n$ via top-to-random shuffling of $a_1$ cards, $a_2$ cards, $\ldots$, $a_k$ cards in that order? This question can be answered by looking at the product $B_{a_1}B_{a_2} \cdots B_{a_k}$, and counting all copies of the term $i(i-1) \cdots 1(i+1) \cdots n$ appearing on the right hand side of Theorem \ref{exp'}. By Fact \ref{perm as term}, $i(i-1) \cdots 1(i+1) \cdots n$ is a term of $B_c$ for any $c \geq i-1$, and each such $B_c$ contains exactly one copy of $i(i-1) \cdots 1(i+1) \cdots n$. It follows that there are \[\sum_{j=i-1}^{\min(\sum_{m=1}^{k}{a_n},n)}{|Q_j^{a_1,\ldots,a_k}|} = \sum_{j = \max(i-1,\max(a_1,\ldots,a_k))}^{\min(\sum_{m=1}^{k}{a_n},n)}{|Q_j^{a_1,\ldots,a_k}|}\] ways to do this. Note that $|Q_j^{a_1,\ldots,a_k}|$ is the number of ways the deck $i(i-1) \cdots 1(i+1) \cdots n$ can be obtained via this sequence of $k$ top to random shuffles through which exactly the cards $1, 2, \ldots, j$ (cumulatively) are touched/picked (or ``hit'', in our terminology) and then reinserted. \\
Since $B_{a_1}B_{a_2} \cdots B_{a_k}$ has $P(n,a_1)P(n,a_2) \cdots P(n,a_k)$ terms/decks in total, the probability of obtaining the deck $i(i-1) \cdots 1(i+1) \cdots n$ via top-to-random shuffling of $a_1$ cards, $a_2$ cards, $\ldots$, $a_k$ cards in that order is \[\frac{1}{\prod_{l=1}^{k}{P(n,a_l)}}\sum_{j = \max(i-1,\max(a_1,\ldots,a_k))}^{\min(\sum_{m=1}^{k}{a_n},n)}{|Q_j^{a_1,\ldots,a_k}|}.\]
Finally, notice that Theorem \ref{exp'} and Fact \ref{perm as term} imply that the probabilities of obtaining decks $\tau_1, \tau_2$ via top-to-random shuffling of $a_1$ cards, $a_2$ cards, $\ldots$, $a_k$ cards in that order are equal whenever $m_{\tau_1} = m_{\tau_2}$.
\end{exm}

\section{Generalizations to the Algebra of G-Permutations}
\label{g-perm}

\subsection{Description of $G$-permutations}
For an arbitrary group $G$ we can define the group $S_n^G$ of \textbf{$G$-permutations} by the wreath product $S_n^G := G \wr S_n$. Now we give a more combinatorial view of $S_n^G$. 

Let $[\hat{n}]$ denote the alphabet $\{\hat{1},\hat{2},\ldots,\hat{n}\}$ (we will use this ``hat'' notation to distinguish these letters from ordinary integers), and let $G^{[\hat{n}]} := \{(g_1,\hat{1}),(g_2,\hat{2}),\ldots,(g_n,\hat{n}) | (g_1,g_2,\ldots,g_n) \linebreak[1] \in G^n\}$. As in the previous sections, we will interpret $G^{[\hat{n}]}$ and $S_n^G$ in terms of decks of $n$ cards, except now each card in the deck has $|G|$ faces, each of which is indexed by an element of $G$.

We denote the identity element by $e$. Also, we define the absolute value of $(g_i,\hat{i})$ to be  $\mathrm{abs}(g_i,\hat{i}) = (e,\hat{i})$. For $g \in G$ we interpret $(g,\hat{i})$ to be card $\hat{i}$ with face $g$ up; we will use $\hat{i}$ as a shorthand for $(e,\hat{i})$ and $g\hat{i}$ as a shorthand for $(g,\hat{i})$ when no confusion arises. The default deck that we start with is $(e,\hat{1})(e,\hat{2})\cdots(e,\hat{n})$, or simply $\hat{1}\hat{2}\cdots\hat{n}$ for short; here we treat each $(e,\hat{j})$ as a single letter. We treat each $\tau \in S_n^G$ as a permutation of $G^{[\hat{n}]}$, and as before we will employ the \textit{deck notation} (inverse one-line notation) for $\tau$. In other words, we write $\tau = [b_1,b_2,\ldots,b_n]$ or simply as a word $\tau = b_1 b_2\cdots b_n$ (where each $b_i \in G^{[\hat{n}]}$ and $\{\mathrm{abs}(b_i) | i \in [n]\} = [\hat{n}]$), where $\tau(b_i) = (e,\hat{i})$; we interpret this as saying $\tau$ sends $b_i$ to position $(e,\hat{i})$ or simply position $i$ in the deck. Then $\tau$ can be seen as a deck obtained by shuffling around the cards of the deck $\hat{1}\hat{2}\cdots\hat{n}$ and simultaneously turning each card so that some face is up; each $b_i$ is equal to some $g_j\hat{j}$ where $g_j \in G$, indicating that card $\hat{j}$ is sent to position $i$ with face $g_j$ up.
\begin{exm}
$\hat{2}(g_1\hat{1})(g_3\hat{3})\hat{4}\cdots\hat{n}$ (where $g_1, g_3 \in G$) is a $G$-permutation that acts on the default deck by sending card $\hat{1}$ to position $2$ with face $g_1$ up, card $\hat{2}$ to position $1$, and card $\hat{3}$ to position $3$ with face $g_3$ up, while fixing all the other $n-3$ cards.
\end{exm}

To describe clearly the action and multiplication of $G$-permutations, let $\sigma, \tau \in S_n^G$ with $\sigma(e,\hat{i}) = (g_{(\sigma,i)},\widehat{p_{(\sigma,i)}})$ and $\tau(e,\hat{j}) = (g_{(\tau,j)},\widehat{p_{(\tau,j)}})$ where $p_{(\sigma,i)}, p_{(\tau,j)} \in [n]$ and $g_{(\sigma,i)}, g_{(\tau,i)} \in G$. For $g \in G$, we have \[\sigma(g,\hat{i}) = g\sigma(e,\hat{i})\] or simply \[\sigma(g\hat{i}) = g\sigma(\hat{i}).\] In our deck notation the product of $\sigma, \tau$ will be carried out from left to right. In other words:
\begin{align*}
\sigma\tau(e,\hat{m}) &= \tau(\sigma(e,\hat{m})) \\
&= \tau(g_{(\sigma,m)},\widehat{p_{(\sigma,m)}}) \\
&=  g_{(\sigma,m)}\tau(e,\widehat{p_{(\sigma,m)}}) \\
&= (g_{(\sigma,m)}g_{(\tau,p_{(\sigma,m)})},\widehat{p_{(\tau,p_{(\sigma,m)})}})
\end{align*} or simply
\begin{align*}
\sigma\tau(\hat{m}) &= \tau(\sigma(\hat{m})) \\
&= \tau(g_{(\sigma,m)}\widehat{p_{(\sigma,m)}}) \\
&= g_{(\sigma,m)}\tau(\widehat{p_{(\sigma,m)}}) \\
&= g_{(\sigma,m)}g_{(\tau,p_{(\sigma,m)})}\widehat{p_{(\tau,p_{(\sigma,m)})}} \numberthis \label{face mult}
\end{align*}
We now interpret the above product in terms of a deck of cards; we will use this interpretation for the rest of this section. Given that $\sigma$ sends card $\hat{i}$ to position $p_{(\sigma,i)}$ with face $g_{(\sigma,i)}$ up and $\tau$ sends card $\hat{j}$ to position $p_{(\tau,j)}$ with face $g_{(\tau,j)}$ up, then $\sigma\tau$ sends card $\hat{m}$ to position $p_{(\tau,p_{(\sigma,m)})}$ with face $g_{(\sigma,m)}g_{(\tau,p_{(\sigma,m)})}$ up.

\begin{exm}
In the case $G = \mathbb{Z}/m\mathbb{Z}$, we get the group $(\mathbb{Z}/m\mathbb{Z}) \wr S_n$ of colored permutations, which are called signed permutations in the case $m=2$. Let $\zeta$ be the primitive $m$th root of unity. Then a $(\mathbb{Z}/m\mathbb{Z})$-permutation is a permutation $\sigma = b_1 b_2 \cdots b_n$ of $\{\zeta^{i_1}\hat{1},\zeta^{i_2}\hat{2},\ldots,\zeta^{i_n}\hat{n} | (i_1,i_2,\ldots,i_n) \break \in (\mathbb{Z}/m\mathbb{Z})^n\} \subset \mathbb{C}$ such that $\sigma(\zeta^{j}b_l) = \zeta^j\sigma(b_l) = \zeta^j\hat{l}$. $\sigma$ can be thought of as a deck of $n$ cards where each card has $m$ faces.
\end{exm}

For $a \in [n]$ and $B_a = \hat{1} \shuffle \hat{2} \shuffle \cdots \shuffle \hat{a} \shuffle W_{a,n} \in \mathbb{Q}[S_n]$ where $W_{a,n}$ is the word $W_{a,n} = \widehat{a+1}\widehat{a+2} \cdots \widehat{n}$, consider the $\mathbb{Q}[S_n^G]$ element \[\hat{B}_a = \sum_{(g_1,g_2,\ldots,g_a)\in G^a}{g_1\hat{1} \shuffle g_2\hat{2} \shuffle g_a\hat{a} \shuffle W_{a,n}};\] as before we abbreviate $e\hat{l}$ as $\hat{l}$, and we carry out the shuffle product as usual, treating each $g_i\hat{i}$ as a single letter. This element describes the action of taking the first $a$ cards of a deck of $n$ cards (by default, the deck is $\hat{1}\hat{2}\cdots\hat{n}$) and then inserting them back into the deck such that a certain face $g_l$ of card $\hat{l}$ is facing up, for each $l \in [a]$. For example, $\hat{1}(g_2\hat{2})\hat{3}\cdots\hat{a}W_{a,n}$ is a term of $\hat{B}_a$ which (as a $G$-permutation) corresponds to the deck obtained by inserting these $a$ cards back into their original positions, with the second card having its $g_2$ face up.

\begin{exm}
In the case $G = \mathbb{Z}/m\mathbb{Z}$, the element $\hat{B}_a$ has an intuitive real-life interpretation. We have a deck of $n$ roulette wheels, each of which has $m$ sectors. We take the first $a$ roulette wheels and insert them back into the deck, while spinning these $a$ wheels at the same time. Each of these $a$ wheels will end up in a certain position in the deck, with a certain sector pointing up.
\end{exm}

\subsection{Top to Random Shuffling Expansion Formula in $\mathbb{Q}[S_n^G]$}
For the rest of this section, assume that $G$ is finite. In the same vein as in the previous section, we will find an expansion formula for the $k$-fold product $\hat{B}_{a_1}\hat{B}_{a_2} \cdots \hat{B}_{a_k}$ in terms of the elements $\hat{B}_c$. $\hat{B}_{a_1}\hat{B}_{a_2} \cdots \hat{B}_{a_k}$ is a sum of terms $\sigma_1\sigma_2 \cdots \sigma_k$, where $\sigma_i \in S_n^G$ is a term of $\hat{B}_{a_i}$. As in the previous sections, we regard two terms $\sigma'_1\sigma'_2 \cdots \sigma'_k$, $\sigma''_1\sigma''_2 \cdots \sigma''_k$ of $\hat{B}_{a_1}\hat{B}_{a_2} \cdots \hat{B}_{a_k}$ as distinct if $(\sigma'_1,\sigma'_2,\ldots,\sigma'_k) \neq (\sigma''_1,\sigma''_2,\ldots,\sigma''_k)$, even if $\sigma'_1\sigma'_2 \cdots \sigma'_k$, $\sigma''_1\sigma''_2 \cdots \sigma''_k$ are equal as products in $S_n^G$.

In analogy to the previous section, each term of $\hat{B}_d$ hits/shuffles the first $d$ cards of the deck. A card is hit by a term ($k$-tuple of $G$-permutations) $\sigma_1\sigma_2 \cdots \sigma_k$ of $\hat{B}_{a_1}\hat{B}_{a_2} \cdots \hat{B}_{a_k}$ if and only if it is hit by some $\sigma_i$ for $i \in [k]$. Thus, each term $\sigma_1\sigma_2 \cdots \sigma_k$ of $\hat{B}_{a_1}\hat{B}_{a_2} \cdots \hat{B}_{a_k}$ hits cards $\hat{1}, \hat{2}, \ldots, \hat{c}$ for a unique $c \in [\max(a_1,\ldots,a_k),\min(\sum_{m=1}^k{a_m},n)]$.

Each term $\sigma_1\sigma_2 \cdots \sigma_k$ of $\hat{B}_{a_1}\hat{B}_{a_2} \cdots \hat{B}_{a_k}$ is equal as a $G$-permutation to a term $t^*$ of $\hat{B}_c$ for some $c \in [\max(a_1,\ldots,a_k),\min(\sum_{m=1}^k{a_m},n)]$. Hence we can partition the set of terms of $\hat{B}_{a_1}\hat{B}_{a_2} \cdots \hat{B}_{a_k}$ into the sets $D_{c,t^*} := \{$terms $\sigma_1\sigma_2 \cdots \sigma_k$ of $\hat{B}_{a_1}\hat{B}_{a_2} \cdots \hat{B}_{a_k}$ that hit the cards $\hat{1}, \hat{2}, \ldots, \hat{c} | \sigma_1\sigma_2 \cdots \sigma_k = t^*$ as $G$-permutations$\}$ for $c \in [\max(a_1,\ldots,a_k),\min(\sum_{m=1}^k{a_m},n)]$ and $t^*$ a term of $\hat{B}_c$. 

It is useful to define the \textbf{absolute value} of a $G$-permutation, to relate $G$-permutations with permutations in $S_n$; this will also allow us to apply some results of the previous section, which dealt with regular permutations. Given a $G$-permutation $\sigma$, we define $\mathrm{abs}(\sigma)$ to be the permutation of $[\hat{n}]$ obtained by erasing the face of every card (in other words, replacing the face of each card by the identity element of $G$). For example, the absolute value of the $G$-permutation $\hat{2}(g_1\hat{1})(g_3\hat{3})\hat{4}\cdots\hat{n}$ (where $g_1, g_3 \in G$) is the regular permutation $\hat{2}\hat{1}\hat{3}\hat{4}\cdots\hat{n}$. For $G$-permutations $\tau_1, \tau_2$ we have $\mathrm{abs}(\tau_1\tau_2) = \mathrm{abs}(\tau_1)\mathrm{abs}(\tau_2)$.

Now fix $c \in [\max(a_1,\ldots,a_k),\min(\sum_{m=1}^k{a_m},n)]$ and $t^*$ a term of $\hat{B}_c$. Notice that for each $\sigma_1\sigma_2 \cdots \sigma_k \in D_{c,t^*}$ we have $\mathrm{abs}(\sigma_1)\mathrm{abs}(\sigma_2) \cdots \mathrm{abs}(\sigma_k) \in D_{c,t}$ (defined in Subsection \ref{shuffles partitions}) where $t = \mathrm{abs}(t^*) \in S_n$. Thus, we can partition $D_{c,t^*}$ into the sets $D_{c,t^*}^s := \{$terms $\sigma_1\sigma_2 \cdots \sigma_k \in D_{c,t^*} | \mathrm{abs}(\sigma_1)\mathrm{abs}(\sigma_2) \cdots \mathrm{abs}(\sigma_k) = s$ as $k$-tuples$\}$ for $s \in D_{c,t}$. Each element of $D_{c,t^{\ast}}$ can be constructed by first choosing $s = \sigma'_1\sigma'_2 \cdots \sigma'_k \in D_{c,t}$, and then choosing the faces for the $a_i$ shuffled cards of each $\sigma'_i$ so that we get all the faces in $t^*$ through the resulting product.

Now we prove a lemma that will be useful in factoring the faces in $t^*$. Let $l$ be a fixed positive integer. For $g,g' \in G$, let $l^{\ast g}$ be the set of $l$-tuples of elements of $G$ such that the product of the $l$ entries equals $g$, and let $l^{\ast g'}$ be the set of $l$-tuples of elements of $G$ such that the product of the $l$ entries equals $g'$.

\begin{lem}
\label{grp elt fac}
We have $|l^{\ast g}| = |l^{\ast g'}|$.
\end{lem}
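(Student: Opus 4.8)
The plan is to exhibit an explicit bijection between $l^{\ast g}$ and $l^{\ast g'}$, using the group structure of $G$. Given an $l$-tuple $(h_1, h_2, \ldots, h_l) \in l^{\ast g}$, so that $h_1 h_2 \cdots h_l = g$, I would map it to the tuple $(h_1, h_2, \ldots, h_{l-1}, h_l g^{-1} g')$. Since $h_1 h_2 \cdots h_{l-1} (h_l g^{-1} g') = g g^{-1} g' = g'$, the image lies in $l^{\ast g'}$. This is well-defined because we only modified the last coordinate, and the modification $x \mapsto x g^{-1} g'$ is a bijection of $G$ (right multiplication by the fixed element $g^{-1}g'$), with inverse $x \mapsto x (g')^{-1} g$. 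Hence the map $(h_1, \ldots, h_l) \mapsto (h_1, \ldots, h_{l-1}, h_l g^{-1} g')$ is a bijection from $l^{\ast g}$ to $l^{\ast g'}$ with inverse $(h_1, \ldots, h_l) \mapsto (h_1, \ldots, h_{l-1}, h_l (g')^{-1} g)$, and therefore $|l^{\ast g}| = |l^{\ast g'}|$.

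Alternatively, and perhaps even more cleanly, one can observe that $l^{\ast g}$ is in bijection with $G^{l-1}$ for \emph{every} $g \in G$: the first $l-1$ entries $h_1, \ldots, h_{l-1}$ may be chosen freely from $G$, and then $h_l$ is forced to equal $(h_1 \cdots h_{l-1})^{-1} g$. Thus $|l^{\ast g}| = |G|^{l-1}$ independently of $g$, which gives the claim at once. (This requires $l \geq 1$, which is assumed; the case $l$ could in principle be $0$ does not arise here since $l$ is a fixed positive integer.)

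There is essentially no obstacle: the only thing to check is that the maps described are mutually inverse, which is a one-line computation in $G$, and that the image of the forward map actually lands in the target set, which is immediate from associativity. The lemma is genuinely elementary — its role is simply to justify, in the subsequent argument, that when one distributes faces across the shuffled cards of each $\sigma_i'$ so as to realize a prescribed product of faces in $t^*$, the number of valid face-assignments depends only on how many cards are hit, not on the specific face values appearing in $t^*$. I would state the $G^{l-1}$ counting version as the proof since it is the shortest and also yields the explicit value $|G|^{l-1}$, which is what feeds into the exponent $|G|^{(\sum a_k) - c}$ in the final expansion formula for $\hat{B}_{a_1} \cdots \hat{B}_{a_k}$.
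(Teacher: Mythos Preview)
Your proof is correct and essentially the same as the paper's: the paper also gives an explicit bijection, namely $(c_1,c_2,\ldots,c_l) \mapsto ((g'g^{-1})c_1,c_2,\ldots,c_l)$, modifying the first coordinate by left multiplication rather than the last coordinate by right multiplication as you do. Your alternative argument directly yields $|l^{\ast g}| = |G|^{l-1}$, which the paper records separately as the subsequent corollary.
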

\begin{proof}
We give a simple bijection $\phi: l^{\ast g} \rightarrow l^{\ast g'}$. Let $(c_1,c_2,\ldots,c_l) \in l^{\ast g}$. We let $\phi$ map $(c_1,c_2,\ldots,c_l) \mapsto ((g'g^{-1})c_1,c_2,\ldots,c_l)$. Since $(g'g^{-1})c_1c_2 \cdots c_l = (g'g^{-1})g = g'$, we have $((g'g^{-1})c_1,c_2,\ldots,c_l) \in l^{\ast g'}$. The inverse map $\phi^{-1}$ is given by $(d_1,d_2,\ldots,d_l) \mapsto ((gg'^{-1})d_1,d_2, \linebreak[1] \ldots,d_l)$ for $(d_1,d_2,\ldots,d_l) \in l^{\ast g'}$.
\end{proof}

Thus, the number of ways of factoring an element of $G$ into $l$ factors is a constant, which we can take to be $l^{\ast e}$ where $e$ is the identity of $G$. We can use this lemma to calculate $l^{\ast e}$.

\begin{cor}
\label{fac calc}
We have $l^{\ast e} = |G|^{l-1}$.
\end{cor}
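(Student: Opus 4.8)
The plan is to combine Lemma \ref{grp elt fac} with a one-line counting argument. By Lemma \ref{grp elt fac}, the cardinality $|l^{\ast g}|$ is independent of $g \in G$; call this common value $N$. Every $l$-tuple $(c_1,c_2,\ldots,c_l) \in G^l$ has a well-defined product $c_1 c_2 \cdots c_l \in G$, and this product map partitions $G^l$ into the blocks $l^{\ast g}$ as $g$ ranges over $G$. Hence
\[
|G|^l = \sum_{g \in G} |l^{\ast g}| = |G| \cdot N,
\]
so $N = |G|^{l-1}$, and in particular $l^{\ast e} = |G|^{l-1}$, as claimed.

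As an alternative that sidesteps Lemma \ref{grp elt fac} entirely, I would instead write down an explicit bijection $G^{l-1} \to l^{\ast e}$: send $(c_1,\ldots,c_{l-1})$ to $(c_1,\ldots,c_{l-1},(c_1 c_2 \cdots c_{l-1})^{-1})$. The entries of the image multiply to $e$, so it lands in $l^{\ast e}$; the map is injective since the first $l-1$ coordinates are read off directly, and surjective since any $(c_1,\ldots,c_l)$ with $c_1 \cdots c_l = e$ forces $c_l = (c_1 \cdots c_{l-1})^{-1}$. This gives $|l^{\ast e}| = |G^{l-1}| = |G|^{l-1}$ directly, and the edge case $l = 1$ reads $1^{\ast e} = |G|^0 = 1$, consistent with $(e)$ being the unique $1$-tuple with product $e$.

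The argument is routine; there is no real obstacle. The only point worth a moment's care is confirming that the product map genuinely induces a partition of $G^l$ into the sets $l^{\ast g}$ (immediate from the definitions), and — in the first version — that Lemma \ref{grp elt fac} is being applied with the same fixed $l$ throughout.
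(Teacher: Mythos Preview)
Your first argument is correct and is essentially identical to the paper's own proof: both invoke Lemma~\ref{grp elt fac} to conclude the fibres $l^{\ast g}$ are equinumerous, then divide $|G|^l$ by $|G|$. Your alternative bijection $G^{l-1}\to l^{\ast e}$ is also correct and gives a slightly more direct route that bypasses Lemma~\ref{grp elt fac} entirely, though in the paper's context the lemma is needed elsewhere anyway.
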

\begin{proof}
There are a total of $|G|^l$ $l$-tuples of elements of $G$, each of which factors some element of $G$. By the lemma, these factorization are equidistributed among the $|G|$ elements of $G$. Therefore, we have $l^{\ast e} = |G|^l/|G| = |G|^{l-1}$.
\end{proof}

To construct an element of $D_{c,t^{\ast}}$, fix $s = \sigma'_1 \cdots \sigma'_k \in D_{c,t}$ and fix a card $\hat{m}$ (where $m \in [c]$) of $t^*$ with face $f_m \in G$. By Theorem \ref{genbij}, $D_{c,t}$ is in natural bijection with $Q_c^{a_1,\ldots,a_k}$, and we can identify $s$ with $\phi_c(s) = \alpha = \{\alpha_1,\alpha_2,\ldots,\alpha_c\} \in Q_c^{a_1,\ldots,a_k}$. Starting from the $m$th position in the identity deck, $\hat{m}$ must be hit $|\alpha_m|$ times before moving to its final position in $t^*$ with face $f_m$ up. By (\ref{face mult}), each time a card is hit, it is multiplied (on the right) by a certain face $g \in G$. It follows that $f_m = g_1g_2 \cdots g_{|\alpha_m|}$ for some $(g_1,g_2,\ldots,g_{|\alpha_m|}) \in G^{|\alpha_m|}$, so the face of card $\hat{m}$ in $t^{\ast}$ can be factored in $|\alpha_m|^{\ast e}$ ways by Lemma \ref{grp elt fac}. Since the faces of the cards $\hat{1}, \ldots, \hat{c}$ in $t^*$ can be factored independently of one another, it follows by Corollary \ref{fac calc} that $|D^s_{c,t^{\ast}}| = \prod_{i=1}^c{|\alpha_i|^{\ast e}} = \prod_{i=1}^c{|G|^{|\alpha_i|-1}} = |G|^{(\sum_{i=1}^k{a_i})-c}$ irrespective of $s$.

Consequently, we have \[|D_{c,t^{\ast}}| = \sum_{s \in D_{c,t}}{|D^s_{c,t^{\ast}}|} = \sum_{s \in D_{c,t}}{|G|^{(\sum_{i=1}^k{a_i})-c}} = |Q_c^{a_1,\ldots,a_k}||G|^{(\sum_{i=1}^k{a_i})-c}\] irrespective of $t^{\ast}$. Since the sets $|D_{c,t^{\ast}}|$ (for $t^*$ a term of $\hat{B}_c$) are all equinumerous with cardinality $|Q_c^{a_1,\ldots,a_k}||G|^{(\sum_{i=1}^k{a_i})-c}$, we therefore obtain the expansion formula \begin{equation}\label{expform} \hat{B}_{a_1}\hat{B}_{a_2} \cdots \hat{B}_{a_k} = \sum_{c=\max(a_1,\ldots,a_k)}^{\min(\sum_{m=1}^k{a_m},n)}{|Q_c^{a_1,\ldots,a_k}||G|^{(\sum_{i=1}^k{a_i})-c}\hat{B}_c}, \end{equation} where $|Q_c^{a_1,\ldots,a_k}|$ is given in (\ref{cardform}).

Now we briefly look at how (\ref{expform}) can be used in computations.
\begin{exm}
\label{g-perm calc}
Suppose we have a deck of $n$ cards $\hat{1}\cdots\hat{n}$ in which each card has $|G|$ faces each of which is labeled by an element of $G$. How many ways can the deck $\hat{i}(g_{i-1}\widehat{i-1})\cdots(g_1\hat{1})\widehat{i+1}\cdots\hat{n}$ (where $g_l \in G$ for $l \in [i-1]$) be obtained via top-to-random shuffling (in which the shuffled cards are flipped randomly and independently) of $a_1$ cards, $a_2$ cards, $\ldots$, $a_k$ cards in that order? We answer this question by looking at the product $\hat{B}_{a_1}\hat{B}_{a_2} \cdots \hat{B}_{a_k}$ and counting all the copies of $\hat{i}(g_{i-1}\widehat{i-1})\cdots(g_1\hat{1})\widehat{i+1}\cdots\hat{n}$ appearing on the right hand side of (\ref{expform}). By Fact \ref{perm as term}, $\hat{i}(g_{i-1}\widehat{i-1})\cdots(g_1\hat{1})\widehat{i+1}\cdots\hat{n}$ is a term of $\hat{B}_c$ for any $c \geq i-1$, and each such $\hat{B}_c$ contains exactly one copy of $\hat{i}(g_{i-1}\widehat{i-1})\cdots(g_1\hat{1})\widehat{i+1}\cdots\hat{n}$. It follows that there are \[\sum_{c=\max(\max(a_1,\ldots,a_k),i-1)}^{\min(\sum_{m=1}^k{a_m},n)}{|Q_c^{a_1,\ldots,a_k}||G|^{(\sum_{i=1}^k{a_i})-c}}\] ways to obtain this deck.

To compute the probability of obtaining $\hat{i}(g_{i-1}\widehat{i-1})\cdots(g_1\hat{1})\widehat{i+1}\cdots\hat{n}$ via this sequence of $k$ top to random shuffles, notice that $\hat{B}_{a_1}\hat{B}_{a_2} \cdots \hat{B}_{a_k}$ has in total $(|G|^{a_1}P(n,a_1))(|G|^{a_2}P(n,a_2))\cdots \linebreak[1] (|G|^{a_k}P(n,a_k)) = |G|^{\sum_{m=1}^{k}{a_m}}\prod_{m=1}^{k}{P(n,a_m)}$ terms/decks. Thus, this probability is \[\frac{\sum_{c=\max(\max(a_1,\ldots,a_k),i-1)}^{\min(\sum_{m=1}^k{a_m},n)}{|Q_c^{a_1,\ldots,a_k}||G|^{(\sum_{i=1}^k{a_i})-c}}}{|G|^{\sum_{m=1}^{k}{a_m}}\prod_{m=1}^{k}{P(n,a_m)}}.\]
\end{exm}

\subsection{Other Generalizations}
Lemma \ref{grp elt fac} can be used to generalize other expansion formulae of $\mathbb{Q}[S_n]$ to $\mathbb{Q}[S_n^G]$. We show below one way this can be done.

For each $\sigma = \hat{c_1}\hat{c_2} \cdots \hat{c_n} \in S_n$ define $\bar{\sigma} \in \mathbb{Q}[S_n^G]$ by $\bar{\sigma} = \sum_{(g_1,g_2,\ldots,g_n) \in G^n}{(g_1\hat{c_1})(g_2\hat{c_2})\cdots(g_n\hat{c_n})}$; in other words, $\bar{\sigma}$ is the sum of all $G$-permutations whose absolute value is $\sigma$. For example, if $\sigma = \hat{2}\hat{1}\hat{4}\hat{3}\hat{5} \cdots \hat{n}$, then $\bar{\sigma}$ is the sum $\sum_{(g_1,g_2,\ldots,g_n) \in G^n}{(g_1\hat{2})(g_2\hat{1})(g_3\hat{4})(g_4\hat{3})(g_5\hat{5})\cdots(g_n\hat{n})}$.

Let ${\cal C} = \{B_r\}_{r \in I}$ be a collection of linearly independent elements of $\mathbb{Q}[S_n]$, where $I$ is an indexing set. Suppose $B_{p_1}, B_{p_2}, \ldots, B_{p_k} \in {\cal C}$ are positive linear combinations of elements of $S_n$ such that \begin{equation}\label{arb exp form}
B_{p_1}B_{p_2} \cdots B_{p_k} = \sum_{r \in I}{C_r^{p_1,\ldots,p_k}B_r},
\end{equation} where the coefficients $C_r^{p_1,\ldots,p_k}$ are all nonnegative.

Since both sides of (\ref{arb exp form}) consist of positive terms (they are both positive sums of elements of $S_n$), both sides must have exactly the same terms with the same multiplicity; no cancellation occurs on either side. Then the terms of the product $B_{p_1}B_{p_2} \cdots B_{p_k}$ can be partitioned into the sets $D'_r$ ($r \in I$) which correspond to the terms of $C_r^{p_1,\ldots,p_k}B_r$. Each term ($k$-tuple of permutations) $\sigma_1\sigma_2 \cdots \sigma_k \in D'_r$ must equal $t$ as permutations, for some term (permutation) $t$ of $B_r$. (\ref{arb exp form}) tells us that we can partition each $D'_r$ into sets $D'_{r,t} = \{$terms $\sigma_1 \cdots \sigma_k$ of $D'_r | \sigma_1 \cdots \sigma_k = t$ as permutations$\}$ (for $t$ a term of $B_r$) with $|D'_{r,t}| = C_r^{p_1,\ldots,p_k}$.

Define $\bar{B}_r := \sum{\bar{\sigma}}$ where the sum is over all terms $\sigma$ of $B_r$. Notice here that every term ($G$-permutation) of $\bar{B}_r$ is a deck in which \textit{each} of the $n$ cards has been assigned a face, in contrast with a term of $\hat{B}_c$ (from the previous subsection) which is a deck in which only the cards $\hat{1}, \ldots, \hat{c}$ have been assigned a face. We will generalize (\ref{arb exp form}) to $\mathbb{Q}[S_n^G]$ by deriving an expansion formula for $\bar{B}_{p_1}\bar{B}_{p_2} \cdots \bar{B}_{p_k}$.

By (\ref{arb exp form}), for each term ($k$-tuple of $G$-permutations) $\sigma'_1\sigma'_2 \cdots \sigma'_k$ of $\bar{B}_{p_1}\bar{B}_{p_2} \cdots \bar{B}_{p_k}$ we have $\mathrm{abs}(\sigma'_1)\mathrm{abs}(\sigma'_2) \cdots \mathrm{abs}(\sigma'_k) \in D'_{r,t}$ for some $r \in I$ and $t$ a term of $B_r$, so $\sigma'_1\sigma'_2 \cdots \sigma'_k = t^* \in S_n^G$ such that $t^*$ is a term of $\bar{B}_r$ and $\mathrm{abs}(t^*) = t$. We can thus partition the terms of $\bar{B}_{p_1}\bar{B}_{p_2} \cdots \bar{B}_{p_k}$ into the sets $D'_{r,t^{\ast}} := \{$terms $\sigma'_1\sigma'_2\cdots\sigma'_k$ of $\bar{B}_{p_1}\bar{B}_{p_2} \cdots \bar{B}_{p_k} | \sigma'_1\sigma'_2\cdots\sigma'_k = t^{\ast}$ as $G$-permutations$\}$ for $r$ ranging through $I$ and $t^*$ a term of $\bar{B}_r$. We can further partition $D'_{r,t^*}$ into the sets $D'_{r,t^*,s} := \{$terms $\sigma'_1\sigma'_2 \cdots \sigma'_k \in D'_{r,t^*} | \mathrm{abs}(\sigma'_1)\mathrm{abs}(\sigma'_2) \cdots \mathrm{abs}(\sigma'_k) = s$ as $k$-tuples$\}$ for $s \in D'_{r,\mathrm{abs}(t^*)}$. Each element of $D'_{r,t^{\ast}}$ can be constructed by first choosing $s = \sigma_1\sigma_2 \cdots \sigma_k \in D'_{r,\mathrm{abs}(t^*)}$, and then choosing the faces for the $n$ cards of each $\sigma_i$ so that we get all the faces in $t^*$ through the resulting product.

To construct an element of $D'_{r,t^{\ast}}$, fix $s = \sigma_1 \cdots \sigma_k \in D'_{r,\mathrm{abs}(t^*)}$ and fix a card $\hat{m}$ of $t^*$ with face $f_m \in G$. Starting from the $m$th position in the identity deck, $\hat{m}$ must be acted upon $k$ times before moving to its final position in $t^*$ with face $f_m$ up. By (\ref{face mult}), each time a card is acted upon, it is multiplied (on the right) by a certain face $g \in G$. By Corollary \ref{fac calc}, $f_m$ can be factored in $k^{*e} = |G|^{k-1}$ ways. Since this is true for each card of $t^*$, there are $|D'_{r,t^*,s}| = (|G|^{k-1})^n$ ways of choosing the faces for all $n$ cards, irrespective of $s$. Since there are $|D'_{r,\mathrm{abs}(t^*)}| = C_r^{p_1,\ldots,p_k}$ ways of choosing $s$, we have $|D'_{r,t^{\ast}}| = C_r^{p_1,\ldots,p_k}(|G|^{k-1})^n$ irrespective of $t^*$.

Therefore, (\ref{arb exp form}) generalizes to the expansion formula \begin{equation}
\bar{B}_{p_1}\bar{B}_{p_2} \cdots \bar{B}_{p_k} = \sum_{r \in I}{C_r^{p_1,\ldots,p_k}(|G|^{k-1})^n\bar{B}_r}
\end{equation}
in $\mathbb{Q}[S_n^G]$.

\section{Further Discussions}
\label{further}
More generally, Garsia \cite{garsiatop} defined the elements $B_p$ of
$\mathbb{Q}[S_n]$, where $p =
(p_1, \ldots, p_k)$ is a composition of $n$, as follows. Define the segmentation
$E(p) = (E_1,E_2,\ldots,E_k)$ of the deck $12 \ldots n$ into successive
factors $E_i =
(p_1+p_2+\ldots+p_{i-1}+1)(p_1+p_2+\ldots+p_{i-1}+2)\ldots(p_1+p_2+\ldots p_i)$
for $i = 1, 2, \ldots, k$. Then $B_p$ is defined as the $\mathbb{Q}[S_n]$
element $B_p = E_1
\shuffle E_2 \shuffle \ldots \shuffle E_k$. The $B_a$ we studied earlier
corresponds to the special case that $p = (1^a,n-a)$. The $B_p$ form a basis of
Solomon's Descent Algebra, which is a subalgebra of $\mathbb{Q}[S_n]$ studied in
depth by Garsia and Reutenauer \cite{garsiades}. It would be interesting to find
an expansion formula for $B_p^k$.

Furthermore, there may be other similar algebraic objects for whom expansion
formulae may be found using similar bijections. For example, promotion operators were defined by Stanley \cite{stanley} and Sch{\"u}tzenberger \cite{schutz}, and then generalized by Ayyer, Klee, and Schilling \cite{aks}, who used these extended promotion operators in their recent work on Markov chains. Let $P$ be an arbitrary poset
of size $n$, with
partial order $\preceq$. We assume that the vertices of $P$ are naturally
labeled by elements in $[n]$. Let ${\cal L} := {\cal L}(P)$ be the set of its
linear extensions, ${\cal L}(P) = \{\pi \in S_n | i \prec j$ in $P$
$\Rightarrow$ $ \pi_i^{-1} < \pi_j^{-1}$ as integers$\}$, which is naturally
interpreted as a subset of $S_n$.

The extended promotion operator can be expressed in terms of more elementary
operators $\tau_i$ ($1 \leq i < n$). Let $\pi = \pi_1 \ldots \pi_n \in {\cal
L}(P)$ be in one-line notation. Place the label $\pi^{-1}_i$ in $P$ at the
location $i$. Then $\tau_i$ acts on $\pi$ on the left by
\begin{enumerate}
\item interchanging $\pi_i$ and $\pi_{i+1}$ if they are not comparable in $P$
\item fixing $\pi$ otherwise.
\end{enumerate}
Then as an operator on ${\cal L}(P)$ we have $\partial_j = \tau_j \tau_{j+1}
\cdots \tau_{n-1}$. Interpreted in terms of card shuffling, in the case that $P$
is an antichain, $\partial_j$ moves the $j$th card to the end of the deck.
Consequently, in this case we have $B_1 = \sum_{j=1}^n{\partial_j^{-1}}$. It
would be interesting to find expansion formulae for
$(\sum_{j=1}^n{\partial_j^{-1}})^k$ in the case that $P$ is not necessarily the
antichain, and calculate the transition matrix and eigenvalues of $(\sum_{j=1}^n{\partial_j^{-1}})^k$.

Finally, we point out that the converse to Fact \ref{perm injec beg} is also true; we give a way by which any permutation $\sigma$ uniquely determines the $B_c$ of which $\sigma$ is a term. By looking at the positions to which the cards $1, 2, \ldots, m_{\sigma}-1$ are sent, we can treat $\sigma$ as an injection. 
\begin{prop}
There is a one-to-one correspondence $\chi$ between $S_n$ and the set $S^{inj} := \{$injective maps $f$ from $[a]$ to $[n]$ such that $f(i) > a$ for some $i < f(a)$, for $a = 0, 1, ..., n-1\}$.
\end{prop}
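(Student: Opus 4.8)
The plan is to build $\chi$ and a two-sided inverse $\psi$ by hand, pivoting on the integer $m_\sigma$ from Fact~\ref{perm as term}. For a deck $\sigma$, set $a_\sigma := m_\sigma - 1$ and write $\sigma(i)$ for the position to which $\sigma$ sends card $i$; then $i \mapsto \sigma(i)$, restricted to $i \in [a_\sigma]$, is an injection $[a_\sigma] \to [n]$, and I would define $\chi(\sigma) := (\sigma(1), \sigma(2), \ldots, \sigma(a_\sigma))$ (the empty tuple when $\sigma$ is the identity, the case $m_\sigma = 1$). For the inverse, given an injection $f \colon [a] \to [n]$ I would let $\psi(f)$ be the unique permutation that is a term of $B_a$ and sends card $i$ to position $f(i)$ for each $i \in [a]$ --- equivalently, the deck with cards $1, \ldots, a$ at positions $f(1), \ldots, f(a)$ and cards $a+1, \ldots, n$ filling the remaining positions in increasing order --- which is legitimate by Fact~\ref{perm injec beg}. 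That $\psi$ sends $S^{inj}$ into $S_n$ is immediate, so the tasks are: (i) $\chi$ sends $S_n$ into $S^{inj}$; and (ii) $\psi \circ \chi = \mathrm{id}$ and $\chi \circ \psi = \mathrm{id}$.

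First I would record that $a_\sigma$ is the least $c$ for which $\sigma$ is a term of $B_c$. That $\sigma$ is a term of $B_{a_\sigma}$ is Fact~\ref{perm as term}. Conversely, if $\sigma$ is a term of $B_c$, then cards $c+1, \ldots, n$ were inserted in increasing order into the gaps left by cards $1, \ldots, c$, so $(c+1)(c+2)\cdots n$ is a subword of $\sigma$ and hence $m_\sigma \le c+1$, i.e. $c \ge a_\sigma$. Since the single letter $n$ is always a subword, $1 \le m_\sigma \le n$, so $a_\sigma \in \{0, 1, \ldots, n-1\}$ and $\chi(\sigma)$ at least has an admissible domain.

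The crux is one lemma, used in both directions: for an injection $f \colon [a] \to [n]$ with $1 \le a \le n-1$, the permutation $\psi(f)$ is a term of $B_{a-1}$ if and only if $f$ does \emph{not} satisfy the membership condition of $S^{inj}$. Indeed, $\psi(f)$ is a term of $B_{a-1}$ exactly when cards $a, a+1, \ldots, n$ occur in increasing relative order in $\psi(f)$; cards $a+1, \ldots, n$ always do (they were placed in order), so this is equivalent to card $a$ --- which sits at position $f(a)$ --- having no higher-labelled card to its left, i.e. to $\{1, 2, \ldots, f(a)-1\} \subseteq \{f(1), \ldots, f(a-1)\}$. Negating, some position below $f(a)$ carries a card with label exceeding $a$, equivalently some value below $f(a)$ is not attained by $f$, which is exactly the condition imposed in the definition of $S^{inj}$. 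Granting this lemma, the rest falls out: $\psi(\chi(\sigma)) = \sigma$ by Fact~\ref{perm injec beg} (as $\sigma$ is a term of $B_{a_\sigma}$ and $\chi(\sigma)$ records where its first $a_\sigma$ cards go); then $\chi(\sigma) \in S^{inj}$ because $\sigma = \psi(\chi(\sigma))$ is not a term of $B_{a_\sigma - 1}$, by minimality of $a_\sigma$; and $\chi(\psi(f)) = f$ because $\psi(f)$ is a term of $B_a$ (so $m_{\psi(f)} \le a+1$) but not of $B_{a-1}$ by the lemma (so $m_{\psi(f)} = a+1$, i.e. $a_{\psi(f)} = a$), whence $\chi(\psi(f)) = (\psi(f)(1), \ldots, \psi(f)(a)) = (f(1), \ldots, f(a)) = f$.

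I expect the main obstacle to be that lemma, and within it the final translation: checking that the terse inequality in the definition of $S^{inj}$ is precisely the negation of ``in $\psi(f)$, card $a$ is preceded only by cards with smaller labels,'' and that this holds uniformly both when $f = \chi(\sigma)$ (needed for (i)) and when $f$ is an arbitrary element of $S^{inj}$ (needed for surjectivity of $\chi$). The remaining points --- admissibility of the domain $[a_\sigma]$, the two composition identities once the lemma is granted, and the degenerate endpoint $a = 0$ (the empty map corresponding to the identity) --- are routine applications of Facts~\ref{perm as term} and \ref{perm injec beg}.
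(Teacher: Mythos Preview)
Your construction of $\chi$ is exactly the paper's --- restrict $\sigma$ to $[m_\sigma-1]$ --- and you supply the explicit inverse $\psi$ and the key lemma that the paper's two-line proof omits entirely; the approaches are identical, yours is simply fleshed out.

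Your flagged concern about the ``final translation'' is well-placed and worth a warning. Read literally with $i\in[a]$, the paper's condition ``$f(i)>a$ for some $i<f(a)$'' is \emph{not} your condition ``some position $<f(a)$ lies outside $f([a])$'': take $a=3$, $n=5$, $f=(5,1,2)$, which satisfies the literal reading (since $f(1)=5>3$ and $1<f(3)=2$) yet has $\psi(f)=23451$ with $m_{\psi(f)}=2$, so $f$ is not in the image of $\chi$. The paper's phrasing is at best ambiguous; your reformulation is the one that actually characterizes the image of $\chi$, and your lemma correctly proves that equivalence. So your argument is sound --- just be aware that the step you singled out as the likely obstacle is exactly where the paper's wording is loose.
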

\begin{proof}
Using Fact \ref{perm as term}, we simply set $\sigma$ to be a term of $B_{m_{\sigma}-1}$. By looking at the positions in the deck to which the cards $1, 2, \ldots, m_{\sigma}-1$ are sent, we determine the injection $\chi(\sigma) : [m_{\sigma}-1] \rightarrow [n]$.
\end{proof}

\begin{exm}
We consider $\sigma = 43215 \cdots n$ to be a term of $B_3$. $\sigma$ corresponds to the injection $\chi(\sigma) : [3] \rightarrow [n]$ given by $\chi(\sigma)(1) = 4, \chi(\sigma)(2) = 3, \chi(\sigma)(3) = 2$.
\end{exm}

This gives us the option of studying permutations by viewing them as injections, and gives us a group structure (inherited from $S_n$) on the set $S^{inj}$ of injections.


\begin{thebibliography}{9}
\bibitem{aks}
Arvind Ayyer, Steve Klee and Anne Schilling, \emph{Combinatorial Markov Chains on Linear
Extensions}, Journal of Algebraic Combinatorics \textbf{39} (4): 853-881, 2014.
\bibitem{bidigare}
T. Patrick Bidigare, Phil Hanlon, and Daniel N. Rockmore, \emph{A combinatorial description of the spectrum for the Tsetlin library and its generalization to hyperplane arrangements}, Duke Math. J. \textbf{99} (1): 135-174, 1999.
\bibitem{brown}
Kenneth S. Brown, \emph{Semigroups, rings, and Markov chains}, J. Theoret. Probab. \textbf{13} (3): 871-938, 2000.
\bibitem{diaconistop}
P. Diaconis, J. A. Fill and J. Pitman, \emph{Analysis of Top to Random
Shuffles}, Combinatorics, Probability and Computing \textbf{1}: 135-155, 1992.
\bibitem{fill}
J. Fill, \emph{An exact formula for the move-to-front rule for self-organizing lists}, J. Theoret. Probab. \textbf{9}: 113-160, 1996.
\bibitem{garsiatop}
A. M. Garsia, \emph{On the Powers of Top to Random Shuffling}, 2002.
Typed notes, UCSD.
\url{https://www.dropbox.com/s/i3jlxa5zvspora3/DiacSHUFFLES.pdf}
\bibitem{garsiades}
A. M. Garsia and C. Reutenauer, \emph{A Decomposition of Solomon's Descent
Algebra}, Advances in Mathematics \textbf{77}: 189-262, 1989.
\bibitem{hend}
W.J. Hendricks, \emph{The Stationary Distribution of an Interesting Markov Chain}, J. Appl. Probability \textbf{9}: 231-233, 1972.
\bibitem{reyes}
Jay-Calvin Uyemura Reyes, \emph{Random walk, semi-direct products, and card shuffling}, Stanford University, ProQuest LLC, UMI Dissertations Publishing, 2002.
\bibitem{schutz}
M. P. Sch{\"u}tzenberger, \emph{Quelques remarques sur une construction de Schensted}, Math. Scand. \textbf{12}: 117-128, 1963.
\bibitem{stanley}
Richard P. Stanley, \emph{Promotion and evacuation}, Electron. J. Combin. \textbf{16} (2, Special volume in honor of Anders Bj{\"o}rner): Research Paper 9, 24pp., 2009.
\end{thebibliography}
\end{document}